\newtheorem{theorem}{Theorem}[section]
\newtheorem{conjecture}[theorem]{Conjecture}
\newtheorem{problem}[theorem]{Problem}
\newtheorem{lemma}[theorem]{Lemma}
\theoremstyle{definition}
\def\epsilon{\varepsilon}
\title{Cops and Robbers, Clique Covers, and Induced Cycles}
\author{Alexander Clow}
\address{Department of Mathematics, Simon Fraser University}
\email{alexander\_clow@sfu.ca}
\author{Imed Zaguia}
\address{Department of Mathematics and Computer Science, Royal Military College of Canada}
\email{zaguia@rmc.ca}
\date{\today}
\begin{document}

\begin{abstract}
    We consider the Cops and Robbers game played on finite simple graphs.
    In a graph $G$, the number of cops required to capture a robber in the Cops and Robbers game is denoted by $c(G)$.
    For all graphs $G$, 
    $c(G) \leq \alpha(G) \leq \theta(G)$
    where $\alpha(G)$ and $\theta(G)$ are the independence number and clique cover number respectively.
    In 2022 Turcotte asked if $c(G) < \alpha(G)$ for all graphs with $\alpha(G) \geq 3$.
    Recently, Char, Maniya, and Pradhan proved this is false, at least when $\alpha = 3$,
    by demonstrating the compliment of the Shrikhande graph has cop number and independence number $3$.
    We prove, using random graphs, the stronger result that for all $k\geq 1$ there exists a graph $G$ such that $c(G) = \alpha(G) = \theta(G) = k$.
    Next, we consider the structure of graphs with $c(G) = \theta(G) \geq 3$.
    We prove, using structural arguments, that any graphs $G$ which satisfies $c(G) = \theta(G) = k \geq 3$
    contain induced cycles of all lengths $3\leq t \leq k+1$.
    This implies all perfect graphs $G$ with $\alpha(G)\geq 4$
    have $c(G) < \alpha(G)$.
    Additionally,we discuss if typical triangle-free and $C_4$-free
    graphs will have
    $c(G) < \alpha(G)$.
\end{abstract}

\maketitle

\section{Introduction}
\pagenumbering{arabic}

Cops and Robbers, see \cite{AIGNER1984, nowakowski1983vertex,quilliot1983problemes}, is a two-player game played on a graph. 
To begin the game, the cop player places $k$ cops onto vertices of the graph,
then the robber player chooses a vertex to place the robber. 
Players take turns moving. 
During the cop player’s turn, each cop either moves to an adjacent vertex or passes and remains at their current vertex. 
Similarly, on the robber player’s turn, the robber either moves to an adjacent vertex or passes and remains at their current vertex.
The cop player wins if there is a cop strategy in which after finitely many moves, a cop can move onto the vertex currently occupied by the robber (called capturing).
The robber player wins if the robber can evade capture indefinitely. 
The least number of cops required for the cop player to win, regardless of the robber’s strategy, is the cop number of a graph, denoted $c(G)$ for a graph $G$.
For more background on Cops and Robbers we recommend \cite{bonato2011game}.

We define the graphs $P_t$, $C_t$, and $K_t$ as the path with $t$ vertices, the cycle with $t$ vertices, and the complete graph on $t$ vertices, respectively.
If $G$ and $H$ are graphs, then we denote the disjoint union of $G$ and $H$ by $G+H$.
For any graph $G$, we use $mG$ to denote the disjoint union of $m$ copies of $G$, and we let $\bar{G}$ denote the compliment of $G$.
We let $\alpha(G),\theta(G),\omega(G),$ and $\chi(G)$ denote the independence number, clique cover number, clique number, and chromatic number of $G$ respectively.
For more background and definitions in graph theory we refer the reader to \cite{west2001introduction}.

It is standard to consider classes of graphs defined by forbidden substructures such as minors or induced subgraphs.
A graph $G$ is $H$-free or $H$-minor free if $G$ does not
contain, respectively, any induced subgraph or minor which is isomorphic to $H$. 
Of particular importance for this paper will be the class of graphs with independence number at most $k$, 
which is equivalently the class of $(k+1)K_1$-free graphs.
Perfect graphs will also be of interest.
A graph $G$ is perfect if for all induced subgraphs $H$ of $G$, $\chi(H) = \omega(H)$.

Cops and Robbers has been studied extensively in relation to forbidden minors and forbidden induced subgraphs.
Andreae \cite{andreae1986pursuit} showed that for all graphs $H$, there exists a constant $m = m(H)$
such that if $G$ has no $H$-minor, then $c(G) \leq m$.
The constant $m = m(H)$ here was recently improved by Kenter, Meger, and Turcotte \cite{kenter2025improved},
particularly for small or sparse graphs $H$.
When forbidding an induced subgraph $H$, one cannot guarantee the cop number of $H$-free graphs is bounded.
In fact, it was shown by Joret, Kami{\'n}ski, and Theis \cite{joret2010cops} 
that $H$-free graphs have bounded cop number if and only if $H$ is a linear forest (i.e a disjoint union of paths).
This characterization of which forbidden induced subgraphs admit classes with bounded cop number was extended by Masjoody, and Stacho \cite{masjoody2020cops}
who demonstrated analogues results when forbidding multiple induced subgraphs simultaneously.

This has lead to a significant amount of effort into determining the correct upper bound on the cop number of $H$-free graphs when $H$ is a linear forest.
Joret, Kami{\'n}ski, and Theis \cite{joret2010cops} proved that for all $t\geq 3$, every $P_t$-free graph has cop number at most $t-2$.
Sivaraman \cite{sivaraman2019application} conjectured that for all $t\geq 4$, every $P_t$-free graph has cop number at most $t-3$.
This is obvious when $t=4$, and was only recently proven when $t=5$ by Chudnovsky, Norin, Seymour, and Turcotte \cite{chudnovsky2024cops}.

The focus of this paper is a related problem, proposed by Turcotte in \cite{turcotte2022cops}.
Before stating the problem we note that for all graphs $G$, $c(G) \leq \alpha(G)$ since the cops can begin the game in a largest independent, which is necessarily a dominating set.
This bound is tight for graphs with independence number at most $2$. To see this consider $C_4$.
Turcotte \cite{turcotte2022cops} asks if this upper bound can be improved to $c(G) < \alpha(G) $ when $\alpha(G)\geq 3$.

When $\alpha(G) = 3$,
Char, Maniya, and Pradhan \cite{char20254} proved that the bound $c(G) \leq \alpha(G)$ is tight.
In particular, they demonstrate a graph with $16$ vertices, see the compliment of the graph in Figure~\ref{Fig:cop=Alpha=3}, which has cop number and independence number $3$.
Notice that the graph is Figure~\ref{Fig:cop=Alpha=3} has chromatic number $4$, hence its compliment has clique cover number $4$.

\begin{figure}[!h]
    \centering
    \includegraphics[scale = 0.625]{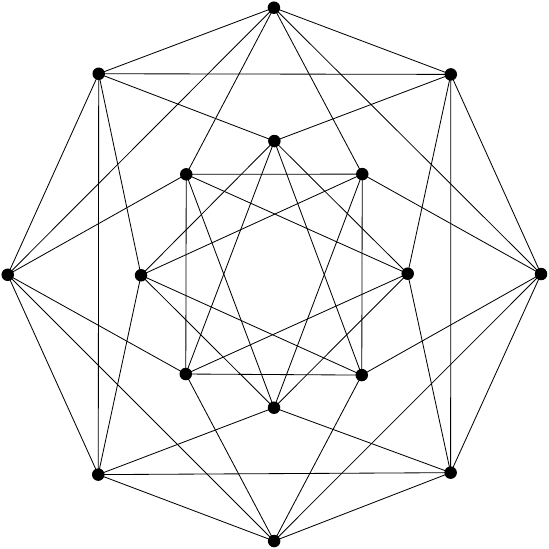}
    \caption{The Shrikhande graph is pictured. The compliment of the Shrikhande graph has cop number and independence number $3$, as well as clique cover number $4$.
    The compliment is not showns since the number of edge crossings make the figure non-instructive.}
    \label{Fig:cop=Alpha=3}
\end{figure}

Recalling that for all graphs $G$, $c(G) \leq \alpha(G) \leq \theta(G)$, we consider the existence and structure of graphs which satisfy the stronger equality $c(G) = \theta(G)$.
Of course any graph with $c(G) = \theta(G)$ must also have $c(G) = \alpha(G)$.
As a result, it is not obvious that such graphs exist when $c(G) = \theta(G)\geq 3$, 
particularly since the graph given by Char, Maniya, and Pradhan \cite{char20254} has cop number $3$ and clique cover number $4$.
Thus, we begin our work by demonstrating that for all $k\geq 1$, there exists a graph with $c(G) = \theta(G) = k$.
This resolves Turcotte's question, as it implies the bound $c(G) \leq \alpha(G)$ is tight regardless of the value of $\alpha(G)$.

\begin{theorem}\label{Thm: Cop = Clique = k for all k}
Let $k$ be a fixed but arbitrary positive integer.
There exists a graph $G$ with at most $11\log(k)^2k^4$ vertices
satisfying that $c(G) = \theta(G) = k$.
\end{theorem}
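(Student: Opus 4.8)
The plan is to build $G$ by the probabilistic method on a skeleton of $k$ disjoint cliques, so that a clique cover of size $k$ comes for free. Fixing an integer $s$ and a probability $p$ of order $1/k$, I would take vertex classes $Q_1,\dots,Q_k$, each inducing a $K_s$, and for every pair $i\neq j$ include each of the $s^2$ edges between $Q_i$ and $Q_j$ independently with probability $p$. Whatever the random edges turn out to be, $Q_1,\dots,Q_k$ partitions $V(G)$ into cliques, so $\theta(G)\le k$. Since the excerpt already gives $c(G)\le\alpha(G)\le\theta(G)$, the moment I prove $c(G)\ge k$ the whole chain collapses to $c(G)=\alpha(G)=\theta(G)=k$; in particular I never need to bound $\alpha$ or $\theta$ from below directly. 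Thus the real content is the lower bound $c(G)\ge k$, together with connectedness, which holds with high probability because at density $p\gg 1/s$ every pair of classes is joined by many edges.

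To bound the cop number from below I would first record a local sufficient condition that, unlike the classical girth-based bounds, is compatible with the many triangles forced by a clique cover: \emph{if for every vertex $r$ and every vertex set $S$ with $|S|\le k-1$ one has $N[r]\not\subseteq\bigcup_{s\in S}N[s]$, then $c(G)\ge k$.} Its proof is a short robber argument in which the robber maintains the invariant that after each of its turns it stands at distance at least $2$ from every cop. The hypothesis, applied to any vertex, shows the $k-1$ closed cop-neighbourhoods never cover $V(G)$, so the robber can start on a vertex realizing the invariant; and if the robber sits at $r$ with the cops at $S$ after their move, then no cop (being at distance $\ge 2$ before moving) can have reached $r$, while the hypothesis hands the robber a neighbour $u\in N[r]\setminus\bigcup_{s\in S}N[s]$, with $\dist(u,s)\ge 2$ for all $s\in S$, to step to, restoring the invariant. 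Hence the robber evades $k-1$ cops forever, and the theorem reduces to showing the random $G$ satisfies this neighbourhood-covering condition with positive probability.

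The core estimate is a union bound over the at most $n\binom{n}{k-1}\le n^{k}$ pairs $(r,S)$, where $n=ks$. Writing $r\in Q_a$, I would split on whether $S$ meets $Q_a$. If $S\cap Q_a=\varnothing$ then $Q_a\subseteq N[r]$, and a vertex of $Q_a\setminus\{r\}$ is dominated by $S$ only through a random edge, so independence across the class bounds the failure probability by $(1-(1-p)^{k-1})^{\,s-1}$. The binding case is $S\cap Q_a\neq\varnothing$: now all of $Q_a$ is dominated and the robber must leave through a random edge, but by pigeonhole some class $Q_b$ is cop-free, since a cop in $Q_a$ leaves at most $k-2$ cops for the remaining $k-1$ classes. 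A vertex $u\in Q_b$ witnesses non-domination precisely when $r\sim u$ and $u$ avoids every cop, an event of probability $p(1-p)^{k-1}$ that is independent across the $s$ vertices of $Q_b$; thus the failure probability is at most $(1-p(1-p)^{k-1})^{s}\le\exp(-s\,p(1-p)^{k-1})$. With $p\asymp 1/k$ one has $p(1-p)^{k-1}=\Theta(1/k)$, so this beats the $n^{k}$ union bound once $s\gtrsim k^{2}\log n$; substituting $n=ks$ gives $n=O(k^{3}\log k)$, comfortably inside the claimed bound, and a more conservative choice of $p$ with a crude accounting of the union bound recovers the stated $11(\log k)^{2}k^{4}$.

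The main obstacle is exactly this second case: ensuring that whenever a cop sits inside the robber's own clique, so that the robber's entire clique is instantly dominated, the sparse random bipartite edges still deliver, with overwhelming probability and uniformly over all $\binom{n}{k-1}$ cop sets, an escape edge into a cop-free class landing on a vertex that no cop dominates. Balancing the edge density $p$ between this escape requirement, which wants $p$ not too small, and the in-class domination requirement, which wants $p$ not too large, is what fixes the order of $s$ and hence the $k^4$ in the vertex count; the small cases $k\le 2$, where the asymptotic form degenerates, are handled by exhibiting fixed graphs directly.
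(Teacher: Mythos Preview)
Your approach is correct and uses the same random construction as the paper---$k$ disjoint cliques of equal size with sparse random bipartite edges between them---but the execution differs in instructive ways. The paper takes $p=2k\log(\ell)/\ell$, proves a maximum-degree bound $\Delta<\tfrac{k\ell}{k-1}-1$ (which both forces $\theta=k$ and handles the robber's escape when no cop occupies the robber's clique), and separately proves an escape lemma (their Lemma~2.4) governing the case where some cop does; these two ingredients are then stitched into a robber strategy. You instead take $p\asymp 1/k$, observe that $\theta\ge k$ is automatic once $c(G)\ge k$ so no separate clique-number argument is needed, and work with the single uniform non-domination hypothesis $N[r]\not\subseteq\bigcup_{s\in S}N[s]$, which absorbs both cases at once via the pigeonhole choice of a cop-free class. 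Your route is cleaner and, as you note, actually yields the stronger bound $n=O(k^{3}\log k)$; the paper's two-lemma decomposition makes each probabilistic estimate individually more transparent but costs roughly a factor of $k\log k$ in the final vertex count. One small wording issue: your sufficient condition must be quantified over $S$ with $r\notin S$, since otherwise taking $r\in S$ makes $N[r]\subseteq N[r]$ trivially and the hypothesis is vacuously false; your robber argument already enforces this (the distance-$2$ invariant guarantees $r\notin S$ after the cops move), so this is only a slip in the statement, not in the proof.
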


With the existence of such graphs established, we consider the cycle structure of graphs with equal cop number and clique cover number.
Specifically, we show the following.

\begin{theorem}\label{Thm: Clique Cover = Cop Structure}
    Let $k\geq 3$.
    If $G$ is a connected graph with $c(G) = \theta(G) = k$, then for all $3 \leq t \leq k+1$, $G$ contains an induced cycle of length $t$.
\end{theorem}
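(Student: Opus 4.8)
The plan is to prove the contrapositive in the following form: if $G$ is connected, admits a clique cover into $k$ cliques, and contains no induced cycle of length $t$ for some fixed $t$ with $3 \le t \le k+1$, then $c(G) \le k-1$; combined with $c(G)\le \theta(G)$ this contradicts $c(G)=\theta(G)=k$. First I would record the normalizing reductions. Since $c(G)\le \alpha(G)\le \theta(G)=k$ and $c(G)=k$, we have $\alpha(G)=\theta(G)=k$, so a minimum clique cover $Q_1,\dots,Q_k$ and a maximum independent set $\{v_1,\dots,v_k\}$ may be chosen aligned, with $v_i\in Q_i$ for each $i$ (an independent set meets each clique at most once, and both sets have size $k$). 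Minimality of the cover gives, for every pair $i\ne j$, a non-edge between $Q_i$ and $Q_j$, since otherwise $Q_i\cup Q_j$ would be a clique and the two could be merged.

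Next I would set up a cop strategy built on the clique cover. The basic gadget is that a single cop placed on any vertex of a clique $Q$ can thereafter \emph{guard} $Q$: if the robber ever moves onto a vertex of $Q$, the cop, being adjacent to every vertex of $Q$, captures on the following move. Thus $k$ cops trivially guard $Q_1,\dots,Q_k$ and win, merely recovering $c(G)\le \theta(G)$. The whole difficulty is to free one cop, that is, to guard all $k$ cliques, or else corner the robber, using only $k-1$ cops; this is exactly what the absence of an induced $C_t$ should buy. The guiding analogy is the fact that \emph{chordal} graphs (no induced cycle of any length $\ge 4$) are dismantlable and hence cop-win, since a simplicial vertex is a corner and corners can be deleted repeatedly. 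I would aim for a ``one level'' version of this principle: missing a single hole length $t$ should let $k-1$ cops perform enough cornering and retraction on the robber's remaining territory to compensate for the absent cop, while maintaining the invariant that the robber is confined to a shrinking union of (parts of) cliques that never re-expands.

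For the endpoint case $t=3$ I would argue separately and more directly. If $G$ is triangle-free then every clique in the cover has at most two vertices, so $|V(G)|\le 2k$; a connected graph on at most $2k$ vertices has cop number at most $k-1$ once $k\ge 3$ (for $k=3$ this is the known fact that cop number $3$ requires at least ten vertices, and for larger $k$ it follows from general upper bounds on $c(G)$ in terms of the order), giving $c(G)\le k-1$. For the longer cycles I would also keep in mind a path-based alternative supplied by the cited bound of Joret, Kami{\'n}ski, and Theis: since every $P_{k+1}$-free graph has cop number at most $k-1$, our $G$ contains an induced path $u_0u_1\cdots u_k$ on $k+1$ vertices, and one could then try to close each prefix $u_0\cdots u_{t-2}$ into an induced $C_t$ by producing a connector vertex adjacent to exactly the two ends $u_0,u_{t-2}$ and to none of the interior path vertices, drawing such connectors from the aligned independent set and the cross non-edges guaranteed by minimality of the cover.

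The hard part will be the strategy-design step: converting ``no induced $C_t$'' into a concrete winning strategy for $k-1$ cops. Guarding cliques is easy, but freeing a cop requires an invariant certifying that the released clique stays robber-free, and it is precisely a hole of length $t$ that would let the robber loop around and re-enter a vacated region. Ruling out that loop from a single forbidden length (rather than from full chordality) is the delicate point, and I expect it to demand a careful, likely inductive, analysis of how the robber's reachable territory interacts with the cliques $Q_1,\dots,Q_k$ and with the interior of a putative induced $C_t$.
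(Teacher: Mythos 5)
Your proposal is a plan rather than a proof: the step you yourself flag as ``the hard part'' --- converting the absence of a single induced cycle length into a concrete winning strategy for $k-1$ cops --- is never carried out, and that step is essentially the entire content of the theorem. The chordal-graph analogy does not transfer: forbidding one cycle length $t$ creates no corners and no retraction, and nothing in your invariant sketch stops the robber from re-entering a vacated clique around an induced cycle of some \emph{allowed} length. The paper's actual mechanism is quite different. It first proves two structural lemmas: every two cliques of the cover are joined by at least one edge (Lemma~\ref{Lemma: Complete-Clique-Decomposition}, proven by a cop strategy plus connectivity --- note this is the opposite of your minimality observation, which gives a \emph{non}-edge between cliques and is not what the strategy needs), and a forced-location lemma (Lemma~\ref{Lemma: Robber First Move}): if the $k-1$ cops occupy vertices $v_i \in K^{(i)}$ each with a neighbour in the free clique $K^{(k)}$, a surviving robber must sit on a vertex of $K^{(k)}$ with neighbours in \emph{every} clique. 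The theorem is then proven by induction on $k$: deleting $K^{(k)}$ yields $c(G') = \theta(G') = k-1$, so all lengths $3 \le t \le k$ come from the inductive hypothesis (with $t=3,4$ handled by separate bespoke strategies at the base), and only the extremal length $k+1$ needs a strategy --- a round-by-round play in which cop $C^{t-1}$ moves onto the robber's previous vertex $u_{t-1}$ while cop $C^{t}$ stirs inside $K^{(t)}$; since $c(G) > k-1$ guarantees the robber survives every round, the forced trajectory $u_1,\dots,u_k,w$ is an induced $C_{k+1}$, with non-adjacency to earlier $u_i$ certified precisely because cops are standing on them. Your per-$t$ contrapositive framing, by contrast, would require a separate strategy for every intermediate $t$, which is strictly harder and which you do not supply for any $t \ge 4$.

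Your two fallback routes also fail as stated. The ``connector'' idea --- closing each prefix $u_0 \cdots u_{t-2}$ of an induced $P_{k+1}$ into an induced $C_t$ --- has no supporting argument: an induced path in a graph with $c = \theta = k$ need not close into induced cycles of all prefix lengths, and neither the aligned independent set nor the cross non-edges from cover minimality produce a vertex adjacent to exactly the two prefix ends and to no interior vertex. For $t=3$, your appeal to ``general upper bounds on $c(G)$ in terms of the order'' does not justify that connected graphs on at most $2k$ vertices have cop number at most $k-1$ for every $k \ge 4$: the known general bounds (Frankl-type, $c(G) = o(n)$) are asymptotic with unspecified constants, so they prove nothing for a fixed moderate $k$. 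This particular claim happens to be repairable --- $c(G) \le \gamma(G) \le n/2$ for connected graphs, with the Payan--Xuong characterization of $\gamma = n/2$ (only $C_4$ and coronas, and coronas retract through corners) closing the gap --- but as written it invokes a theorem that does not deliver the stated inequality, whereas the paper's Lemma~\ref{Lemma: There is a triangle} handles the triangle-free case by an explicit strategy built on Lemmas~\ref{Lemma: Complete-Clique-Decomposition} and~\ref{Lemma: Robber First Move}. In short, you identified the correct high-level contrapositive, but the forced-location lemma and the cycle-tracing strategy that make the theorem true are missing.
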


This theorem has some nice corollaries. 
Specifically, we are able to show cop number cannot equal independence number for certain classes, such as perfect graphs.
This shows Turcotte's question remains interesting when restricted to special classes of graphs.

Recall the weak perfect graph theorem, proven by Lov\'asz \cite{lovasz1972normal}, states the a graph $G$ is perfect if and only if $\bar{G}$ is perfect.
Thus, every perfect graph $G$ has $\alpha(G) = \theta(G)$.
Also, recall the strong perfect graph theorem, proven by Chudnovsky and Seymour \cite{chudnovsky2006strong}, which states that a graph $G$ is perfect if and only if for all $k\geq 2$, $G$ is $C_{2k+1}$-free and $\bar{C}_{2k+1}$-free.
Taking these facts together, Theorem~\ref{Thm: Clique Cover = Cop Structure} immediately implies one cannot have a perfect graph with large independence number equal to cop number.

\begin{theorem}\label{Coro: Perfect Graphs}
    If $G$ is a connected perfect graph with $\alpha(G) \geq 4$, then $c(G) < \alpha(G)$.
\end{theorem}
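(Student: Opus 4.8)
The plan is to deduce this as a short corollary of Theorem~\ref{Thm: Clique Cover = Cop Structure} by contradiction, using the two facts about perfect graphs recalled above. Suppose $G$ is a connected perfect graph with $\alpha(G)\geq 4$, and suppose toward a contradiction that $c(G)=\alpha(G)$. Since $c(G)\leq\alpha(G)$ holds for every graph, it suffices to rule out equality.

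First I would pin down $\theta(G)$. By the weak perfect graph theorem $\bar G$ is perfect, so applying the identity $\chi=\omega$ to $\bar G$ and translating across the complement (namely $\chi(\bar G)=\theta(G)$ and $\omega(\bar G)=\alpha(G)$) gives $\alpha(G)=\theta(G)$. Combined with the standing chain $c(G)\leq\alpha(G)\leq\theta(G)$ and the assumption $c(G)=\alpha(G)$, this forces
\[
c(G)=\alpha(G)=\theta(G)=:k,
\]
where $k\geq 4$.

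Now I would invoke the structural result. Since $k\geq 4\geq 3$, Theorem~\ref{Thm: Clique Cover = Cop Structure} applies and guarantees that $G$ contains an induced cycle of every length $t$ with $3\leq t\leq k+1$. Because $k\geq 4$ we have $k+1\geq 5$, so taking $t=5$ shows that $G$ contains an induced $C_5$. But $C_5$ is imperfect, as $\chi(C_5)=3>2=\omega(C_5)$, and perfection is inherited by induced subgraphs directly from the definition; equivalently, the strong perfect graph theorem forbids every odd hole, in particular $C_5$, as an induced subgraph of a perfect graph. Either way, a perfect graph cannot contain an induced $C_5$, contradicting the previous step. Hence $c(G)<\alpha(G)$.

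There is no genuinely hard step here, since Theorem~\ref{Thm: Clique Cover = Cop Structure} does all the work; the only points requiring care are the bookkeeping across the complement when identifying $\theta(G)=\alpha(G)$, and the index arithmetic. In particular, it is worth highlighting that the hypothesis $\alpha(G)\geq 4$ is exactly what is needed to make $5\leq k+1$ and thereby force an \emph{odd} induced cycle; when $\alpha(G)=3$ the structural theorem only guarantees induced copies of $C_3$ and $C_4$, both of which occur freely in perfect graphs, so the argument correctly fails to extend to that case, consistent with the $\alpha=3$ example of Char, Maniya, and Pradhan.
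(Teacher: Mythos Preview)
Your proof is correct and matches the paper's approach exactly: the paper states this theorem as an immediate consequence of Theorem~\ref{Thm: Clique Cover = Cop Structure} together with the weak and strong perfect graph theorems, and your argument fills in precisely those details (using $\alpha=\theta$ to get $c=\theta=k\geq 4$, then extracting an induced $C_5$ to violate perfection).
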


A similar, albeit weaker, question:
for which classes of graphs $\mathcal{G}$ does a `typical' graph $G \in \mathcal{G}$ have $c(G) < \alpha(G)$?
Typical here refers to almost every graph in $\mathcal{G}$ satisfying this property.
Recently, Reed and Yuditsky \cite{reed2025asymptotic} proved that if $H$ is a tree, or a cycle $C_\ell$ where $\ell\neq 6$, then 
almost every $H$-free graph $G$ has $\chi(G) = \omega(G)$.
If the same result is true for forests $F$ and $F$-free graphs, 
then we can combine Theorem~\ref{Thm: Clique Cover = Cop Structure} along with some easy to prove lemmas
to show the following result.
A proof is provided in Section~4.

\begin{theorem}\label{Coro: typical C_l-free Graphs}
    Assume that almost all $3K_1$-free graphs and $2K_2$-free graphs have $\chi = \omega$.
    Let $\ell$ equal $3$ or $4$.
    Almost every $C_\ell$-free graph $G$ has $c(G) < \alpha(G)$.
\end{theorem}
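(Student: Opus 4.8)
The plan is to reduce the statement about $C_\ell$-free graphs to the two unproven assumptions (that almost all $3K_1$-free and almost all $2K_2$-free graphs have $\chi=\omega$) by passing to complements and applying Theorem~\ref{Thm: Clique Cover = Cop Structure}. First I would observe that $G$ is $C_3$-free precisely when $\bar G$ is $\overline{C_3}=3K_1$-free, and $G$ is $C_4$-free precisely when $\bar G$ is $\overline{C_4}=2K_2$-free; since complementation is a bijection on (labelled) graphs, ``almost every $C_\ell$-free graph'' corresponds to ``almost every $\bar G$ with $\bar G$ ranging over $3K_1$-free (resp.\ $2K_2$-free) graphs.'' Thus the hypotheses hand us, for almost every $C_\ell$-free $G$, the equality $\chi(\bar G)=\omega(\bar G)$, which by definition of clique cover and independence numbers of the complement reads $\alpha(G)=\theta(G)$.

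Next I would deal with the cop number. The goal is to upgrade $\alpha(G)=\theta(G)$ into the strict inequality $c(G)<\alpha(G)$, and the natural tool is the contrapositive of Theorem~\ref{Thm: Clique Cover = Cop Structure}: if a connected graph $G$ had $c(G)=\theta(G)=k\ge 3$, it would contain induced cycles of every length $3\le t\le k+1$. So I would split into cases on $\alpha(G)$. If $\alpha(G)\le 2$ the claim needs separate handling, but one expects almost all $C_\ell$-free graphs to have large independence number, so the typical case is $\alpha(G)=k\ge 3$. In that typical case, suppose for contradiction that $c(G)=\alpha(G)=\theta(G)=k$; since $G$ is $C_\ell$-free with $\ell\in\{3,4\}$, $G$ contains no induced $C_\ell$, yet Theorem~\ref{Thm: Clique Cover = Cop Structure} forces an induced cycle of length $\ell$ (as $3\le \ell\le k+1$ whenever $k\ge\ell-1$, which holds once $k\ge 3$ and $\ell\le 4$). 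This contradiction yields $c(G)<\alpha(G)$.

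I would then assemble these pieces, being careful about two technical points. The first is connectivity: Theorem~\ref{Thm: Clique Cover = Cop Structure} is stated for connected $G$, so I would need the easy lemma (alluded to in the excerpt as ``some easy to prove lemmas'') that almost every $C_\ell$-free graph is connected; this should follow from standard results that dense hereditary classes are almost surely connected, or be absorbed into the ``almost all'' quantifier. The second is that I must confirm the threshold $\alpha(G)\ge \ell-1$ holds almost surely, i.e.\ that almost every $C_\ell$-free graph has $\alpha(G)$ at least $3$ (for $\ell=4$) or at least $2$, handled together with verifying that the $C_\ell$-free condition and the large-$\alpha$ condition are compatible for almost all graphs. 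Since the hypothesis already tells us $\alpha(G)=\theta(G)$ and these numbers are large for typical graphs in such classes, this should follow from the same structural description of almost all graphs in the class.

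The main obstacle I expect is not the logical core—which is a clean contrapositive application of Theorem~\ref{Thm: Clique Cover = Cop Structure} combined with complementation—but rather making the ``almost every'' bookkeeping rigorous: verifying that the exceptional sets (disconnected graphs, graphs with small independence number, and graphs failing $\chi=\omega$) simultaneously have vanishing density within the $C_\ell$-free class. The cleanest route is to intersect three ``almost all'' events and note a finite intersection of almost-sure events is almost sure, so I would state each as a separate lemma and then conclude. The remaining subtlety is ensuring the complementation bijection preserves the relevant asymptotic density, which it does since it is a measure-preserving involution on the uniform distribution over labelled $n$-vertex graphs, so ``almost all $C_\ell$-free'' and ``almost all $\overline{C_\ell}$-free'' are genuinely equivalent notions.
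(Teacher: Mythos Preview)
Your proposal is correct and follows essentially the same route as the paper: complementation converts the hypothesis on $3K_1$-free and $2K_2$-free graphs into $\alpha(G)=\theta(G)$ for almost every $C_\ell$-free $G$, and then the forbidden-cycle structure (the paper invokes Lemmas~\ref{Lemma: There is a triangle} and~\ref{Lemma: There is a C4}, which are the $t=3,4$ cases of Theorem~\ref{Thm: Clique Cover = Cop Structure}) rules out $c(G)=\theta(G)\ge 3$.

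The one substantive difference is how the small-$\alpha$ boundary cases are handled. You plan to argue that almost every $C_\ell$-free graph has $\alpha\ge 3$ and absorb the rest into the ``almost all'' quantifier. The paper instead proves two concrete lemmas: for $\ell=3$ it uses the Ramsey fact $R(3,3)=6$ to get $\alpha\ge 3$ for all triangle-free graphs on at least six vertices (Lemma~\ref{Lemma: alpha>2 Tri-free}); for $\ell=4$ it does \emph{not} argue that $\alpha=2$ is atypical but rather proves directly that every connected $C_4$-free graph with $\theta=2$ already satisfies $c(G)=1$ (Lemma~\ref{Lemma: theta=2 C4-free}, via a short corner/retract argument). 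The paper's treatment of the $C_4$-free, $\alpha=2$ case is therefore unconditional rather than asymptotic, which sidesteps the need to estimate how many $C_4$-free graphs have small independence number. Your version would still go through, but you would owe a density estimate that the paper avoids. (Neither you nor the paper explicitly dispatches the connectivity issue; both implicitly fold it into the ``almost all'' quantifier.)
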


The rest of the paper is structured as follows.
In Section~2 we prove Theorem~\ref{Thm: Cop = Clique = k for all k} using random graphs.
Next, in Section~3 we prove Theorem~\ref{Thm: Clique Cover = Cop Structure} using a structural argument.
Finally, we prove Theorem~\ref{Coro: typical C_l-free Graphs} in Section~4.
We conclude with a discussion of future work.

\section{There Exists Graphs with $c(G) = \theta(G)$}

In this section we will prove Theorem~\ref{Thm: Cop = Clique = k for all k}.
Our proof uses random graphs.
To this end we define the following random graph, which we denote $H(k,\ell,p)$.
Let $k$ and $\ell$ be integers, and let $p \in (0,1)$ be a real number.
Let $V(H(k,\ell,p))$ be the union of $k$ disjoint sets $S_1,\dots, S_k$ all of whom have cardinality $\ell$.
We suppose that each set $S_i$ induces a clique in $H(k,\ell,p)$.
For all pairs $u \in S_i$ and $v\in S_j$ such that $i\neq j$, 
the probability that edge $(u,v)$ is in $H(k,\ell,p)$ is $p$, and this event is independent from the event of any other edge being in $H(k,\ell,p)$.

Before proceeding further, we recall a useful and standard probabilistic inequality.
Inequalities of this type are often called Chernoff bounds.
Here $\exp(z) := e^z$.

\begin{lemma}\label{Lemma: Chernoff's Bound}\emph{[\cite{janson2011random}]}
    Let and $0 < \epsilon \leq \frac{3}{2}$ be a real number and let $X$ be a random variable with binomial distribution $Bin(n,p)$ where $0<p<1$. Then, 
    \[
    \mathbb{P}\Big(|X - \mathbb{E}(X)| \geq \epsilon\mathbb{E}(X)\Big) \leq \exp\Bigg(-\frac{\mathbb{E}(X)\epsilon^2}{3}\Bigg).
    \]
\end{lemma}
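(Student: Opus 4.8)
The plan is to establish this by the classical exponential moment (Cram\'er--Chernoff) method, proving one-sided tail bounds and then combining them. Write $\mu = \mathbb{E}(X) = np$ and use that $X \sim \mathrm{Bin}(n,p)$ is a sum $X = \sum_{i=1}^{n} X_i$ of independent Bernoulli$(p)$ variables, so that its moment generating function factorizes as $\mathbb{E}(e^{tX}) = (1 + p(e^{t}-1))^{n}$.

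For the upper tail I would fix $t > 0$ and apply Markov's inequality to the nonnegative variable $e^{tX}$, giving
\[
\mathbb{P}\bigl(X \geq (1+\epsilon)\mu\bigr) \leq e^{-t(1+\epsilon)\mu}\,\mathbb{E}\bigl(e^{tX}\bigr) \leq \exp\bigl(-t(1+\epsilon)\mu + \mu(e^{t}-1)\bigr),
\]
where the last step uses $1 + x \leq e^{x}$. Optimizing over $t$ by taking $t = \ln(1+\epsilon)$ collapses this to $\exp(-\mu\,h(\epsilon))$, where $h(\epsilon) = (1+\epsilon)\ln(1+\epsilon) - \epsilon$. The lower tail is treated symmetrically with $t < 0$, producing the analogous exponent $\epsilon + (1-\epsilon)\ln(1-\epsilon)$.

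The crux of the argument, and the only point at which the hypothesis $\epsilon \leq \tfrac{3}{2}$ is needed, is the elementary inequality $h(\epsilon) \geq \epsilon^{2}/3$ on the range $0 < \epsilon \leq \tfrac{3}{2}$. I expect this to be the main obstacle, since the probabilistic machinery above is routine. I would verify it by bounding $h(\epsilon) \geq \frac{3\epsilon^{2}}{6 + 2\epsilon}$ (a standard convexity estimate) and noting that $\frac{3\epsilon^{2}}{6 + 2\epsilon} \geq \frac{\epsilon^{2}}{3}$ holds exactly when $\epsilon \leq \tfrac{3}{2}$; the lower-tail exponent admits the even stronger bound $\geq \epsilon^{2}/2$, so it imposes no further restriction. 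Adding the two one-sided estimates then yields the two-sided inequality, up to the usual leading constant recorded in \cite{janson2011random}.
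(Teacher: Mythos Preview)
The paper does not supply a proof of this lemma at all; it is simply quoted from \cite{janson2011random} as a standard probabilistic tool and invoked as a black box in the proof of Lemma~\ref{Lemma: Max-degree random}. Your sketch is the canonical Cram\'er--Chernoff derivation and is correct in substance: the MGF bound $\mathbb{E}(e^{tX})\le\exp(\mu(e^{t}-1))$, the optimization $t=\ln(1+\epsilon)$, and the calculus estimate $h(\epsilon)\ge 3\epsilon^{2}/(6+2\epsilon)\ge\epsilon^{2}/3$ for $0<\epsilon\le 3/2$ are all standard and valid, as is the lower-tail bound $g(\epsilon)\ge\epsilon^{2}/2$. The only gap is the one you already flag: summing the two one-sided tails yields $2\exp(-\mu\epsilon^{2}/3)$, not $\exp(-\mu\epsilon^{2}/3)$ as stated. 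This discrepancy is harmless for the paper's purposes, since the sole application (bounding $\mathbb{P}(Y\ge\tfrac{\ell}{k-1})$ in Lemma~\ref{Lemma: Max-degree random}) is a one-sided upper-tail event, where your argument gives exactly the exponent claimed with no leading constant.
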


Using this Chernoff bound we can calculate the clique cover number of $H(k,\ell,p)$ with high probability
for certain useful values of $k,\ell,$ and $p$.
To do this, we first consider the maximum degree of $H(k,\ell,p)$.
If the base of a logarithm is not specified then assume it is the natural logarithm.

\begin{lemma}\label{Lemma: Max-degree random}
    Let $k$ be a fixed constant.
    For all $\ell$ such that $5k(k-1)\log(\ell) \leq \frac{\ell}{k-1}$,
    \[
    \mathbb{P}\Bigg(\Delta(H(k,\ell,p)) < \frac{k\ell}{k-1} - 1 \Bigg) \geq 1 - k\ell^{1-\frac{3(k-1)k}{2}}
    \]
    when $p = \frac{2k\log(\ell)}{\ell}$.
\end{lemma}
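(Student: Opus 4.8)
The plan is to analyze the degree of a single vertex, which splits into a deterministic and a random part, and then to apply the Chernoff bound of Lemma~\ref{Lemma: Chernoff's Bound} followed by a union bound over all $k\ell$ vertices. First I would fix a vertex $v \in S_i$. Since $S_i$ induces a clique, $v$ is adjacent to all $\ell - 1$ other vertices of $S_i$ with certainty, while each of the $(k-1)\ell$ vertices lying outside $S_i$ is a neighbour of $v$ independently with probability $p$. Writing $X_v$ for the number of these external neighbours, we have $X_v \sim \mathrm{Bin}\big((k-1)\ell,\,p\big)$ and $\deg(v) = (\ell-1) + X_v$. Substituting $p = \frac{2k\log(\ell)}{\ell}$ gives $\mathbb{E}(X_v) = (k-1)\ell\, p = 2k(k-1)\log(\ell)$.

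The next step is to translate the target degree bound into a bound on $X_v$. A direct computation shows
\[
\deg(v) < \frac{k\ell}{k-1} - 1
\quad\Longleftrightarrow\quad
X_v < \frac{\ell}{k-1},
\]
so it suffices to control $X_v$. Here the hypothesis $5k(k-1)\log(\ell) \leq \frac{\ell}{k-1}$ is exactly what is needed: it says precisely that $\frac{5}{2}\mathbb{E}(X_v) \leq \frac{\ell}{k-1}$, so proving $X_v < \frac{5}{2}\mathbb{E}(X_v)$ already forces $X_v < \frac{\ell}{k-1}$.

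I would then apply Lemma~\ref{Lemma: Chernoff's Bound} with the extremal choice $\epsilon = \frac{3}{2}$, which is permitted since the lemma allows $0 < \epsilon \leq \frac{3}{2}$. Because the event $\{X_v \geq \frac{5}{2}\mathbb{E}(X_v)\}$ is contained in $\{|X_v - \mathbb{E}(X_v)| \geq \frac{3}{2}\mathbb{E}(X_v)\}$, the bound yields
\[
\mathbb{P}\Big(X_v \geq \tfrac{5}{2}\mathbb{E}(X_v)\Big)
\leq \exp\!\Big(-\tfrac{\mathbb{E}(X_v)}{3}\cdot\tfrac{9}{4}\Big)
= \exp\!\Big(-\tfrac{3}{2}k(k-1)\log(\ell)\Big)
= \ell^{-\frac{3(k-1)k}{2}}.
\]
Finally, taking a union bound over all $k\ell$ vertices shows that the probability that some vertex has degree at least $\frac{k\ell}{k-1}-1$ is at most $k\ell \cdot \ell^{-\frac{3(k-1)k}{2}} = k\ell^{\,1-\frac{3(k-1)k}{2}}$, and the claimed inequality follows by complementation.

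The computations are all routine, so I do not expect a genuine obstacle; the work is essentially careful bookkeeping. The one point requiring attention is committing to the extremal value $\epsilon = \frac{3}{2}$, since this is what produces the exponent $\frac{3(k-1)k}{2}$ and is matched to the constant $5$ appearing in the hypothesis, and keeping the deterministic clique contribution $\ell - 1$ cleanly separated from the random binomial part $X_v$ throughout the argument.
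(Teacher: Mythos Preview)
Your proposal is correct and follows essentially the same argument as the paper: split the degree into the deterministic clique part $\ell-1$ and a $\mathrm{Bin}((k-1)\ell,p)$ random part, use the hypothesis to reduce to a $\frac{5}{2}\mathbb{E}$-tail event, apply the Chernoff bound of Lemma~\ref{Lemma: Chernoff's Bound} with $\epsilon=\frac{3}{2}$, and finish with a union bound over the $k\ell$ vertices. The only difference is notational (the paper calls the full degree $X_v$ and the binomial part $Y$, whereas you use $X_v$ for the binomial part), and your exponent computation $\exp(-\tfrac{3}{4}\mathbb{E}(X_v))=\ell^{-3k(k-1)/2}$ matches the paper's.
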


\begin{proof}
    Let $k$ be a fixed constant, let  $5k(k-1)\log(\ell)  \leq \frac{\ell}{k-1}$, and let $p = \frac{2k\log(\ell)}{\ell}$.
    Let $H = H(k,\ell,p)$.
    Consider the event that $\Delta(H) \geq \frac{k\ell}{k-1}  - 1$.
    For each vertex $v$ in $H$ let $X_v$ be the random variable that counts the degree of $v$ in $H$.
    Observe that $X_v = \ell - 1 + Y$ where $Y$ is a binominal random variable with the same distribution as $Bin((k-1)\ell,p)$.
    Then applying the union bound,
    \begin{align*}
        \mathbb{P}\Bigg(\Delta(H) \geq \frac{k\ell}{k-1}  - 1 \Bigg) & \leq |V(H)| \mathbb{P}\Bigg(X_v \geq \frac{k\ell}{k-1}  - 1\Bigg)\\
        & = k\ell \mathbb{P}\Bigg(Y \geq \frac{\ell}{k-1}\Bigg).
    \end{align*}
    Thus, to bound the probability of $\Delta(H) \geq \frac{k\ell}{k-1}  - 1$ it is sufficient to bound the probability that $Y \geq \frac{\ell}{k-1}$.

    Recalling that $Y$ has the same distribution as $Bin((k-1)\ell,p)$,
    the expected value of $Y$ is $p(k-1)\ell = 2(k-1)k\log(\ell)$.
    By assumption $5k(k-1)\log(\ell) \leq \frac{\ell}{k-1}$, so
    applying Lemma~\ref{Lemma: Chernoff's Bound} with $\epsilon = \frac{3}{2}$ gives that,
    \begin{align*}
        \mathbb{P}\Bigg(Y\geq \frac{\ell}{k-1} \Bigg)
        & \leq  \mathbb{P}\Big(Y\geq 5k(k-1)\log(\ell) \Big) \\
        & \leq \mathbb{P}\Bigg(|Y - 2k(k-1)\log(\ell) | \geq \frac{3}{2} \Big(2(k-1)k\log(\ell) \Big) = 3(k-1)k\log(\ell) \Bigg) \\
        & \leq \exp\Bigg(-\frac{3(k-1)k\log(\ell)}{2}\Bigg) \\
        & = \ell^{-\frac{3(k-1)k}{2}}
    \end{align*}
    Plugging this into the earlier union bound gives 
    \begin{align*}
        \mathbb{P}\Bigg(\Delta(H) \geq \frac{k\ell}{k-1}  - 1 \Bigg) & \leq k\ell \Big( \ell^{-\frac{3(k-1)k}{2}}\Big) \\
        & = k\ell^{1-\frac{3(k-1)k}{2}}.
    \end{align*}
    This concludes the proof.
\end{proof}

\begin{lemma}\label{Lemma: Clique = k in H}
    Let $k$ be a fixed constant and let $\ell$ and $p$ be fixed but arbitrary.
    If the graph $H(k,\ell,p)$ has maximum degree 
    \[
    \Delta(H(k,\ell,p)) < \frac{k\ell}{k-1} - 1
    \]
    then $\theta(H(k,\ell,p)) = k$.
\end{lemma}

\begin{proof}
 Let $k$ be a fixed constant and let $\ell$ and $p$ be fixed but arbitrary.
Let $H = H(k,\ell,p)$.
To begin we note that trivially $\theta(H)\leq k$, since $S_1,\dots, S_k$ is a partition of the vertices of $H$ such that each $S_i$ induces a clique.
So it suffice to show that $\theta(H)> k-1$ if $\Delta(H) < \frac{k\ell}{k-1} - 1$.

Suppose that $H$ has $\theta(H) \leq k-1$.
Since $\theta(H) = \chi(\bar{H})$ and $\omega(H) = \alpha(\bar{H})$
we note that 
\begin{align*}
    k - 1 & \geq \theta(H) = \chi(\bar{H}) \geq \frac{|V(H)|}{\alpha(\bar{H})} = \frac{k\ell}{\omega(H)}
\end{align*}
implies that $\omega(H) \geq \frac{k\ell}{k-1}$.
Trivially, if $\omega(H) \geq \frac{k\ell}{k-1}$, then $\Delta(H) \geq \frac{k\ell}{k-1}  - 1$, since any vertex in a clique of size $\frac{k\ell}{k-1}$ must have degree at least $\frac{k\ell}{k-1}  - 1$.
Therefore, if $H$ has $\Delta(H) < \frac{k\ell}{k-1} - 1$,
then $\omega(H) < \frac{k\ell}{k-1}$, which implies $\theta(H) > k-1$.
This
concludes the proof.
\end{proof}

Next, we prove that for all $i$, every vertex $v\in S_i$ has a useful adjacency property with vertices outside of $S_i$ with high probability.
We will use this adjacency property in $H(k,\ell,p)$ to lower bound cop number.
Our approach to lower bounding cop number is similar to many other arguments in the cops and robber literature.

\begin{lemma}\label{Lemma: Always an escape in H}
    Let $k$ be a fixed constant, let $\ell\geq 2$ be fixed but arbitrary, and let $p = \frac{2k\log(\ell)}{\ell}$.
    If $v\in S_i$
    let $E_v$ be the event that there exists an integer $j\neq i$ and a set of vertices $u_1,\dots, u_{k-1} \in V(H(k,\ell,p))\setminus (S_j \cup \{v\})$
    such that 
    \[
    N(v) \cap S_j \subseteq \Big(\bigcup_{t=1}^{k-1} N(u_t) \Big).
    \]
    Then, the probability that no event $E_v$ occurs is at least 
    \[
    1 - \frac{k^{k+1}}{(k-1)!}\exp\Big(4 k^{3} \log(\ell)^2 \ell^{(\frac{1}{\ell}-1)}\Big)\ell^{-k}.
    \]
\end{lemma}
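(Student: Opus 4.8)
The plan is to bound the probability that \emph{some} event $E_v$ occurs by a union bound over all $k\ell$ vertices $v$, since the desired lower bound on the probability that no $E_v$ occurs is exactly $1 - \mathbb{P}\left(\bigcup_v E_v\right)$. To estimate $\mathbb{P}(E_v)$ for a single $v \in S_i$, I would peel off a second union bound over the choices hidden inside $E_v$: over the index $j \neq i$ (there are $k-1$ such choices) and over the set $\{u_1,\dots,u_{k-1}\}\subseteq V(H(k,\ell,p))\setminus(S_j\cup\{v\})$ (there are at most $\binom{(k-1)\ell-1}{k-1}\le \frac{((k-1)\ell)^{k-1}}{(k-1)!}$ of these). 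This reduces the whole estimate to bounding, for one fixed configuration $(v,j,u_1,\dots,u_{k-1})$, the probability of the single covering event $N(v)\cap S_j\subseteq \bigcup_{t} N(u_t)$.

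The core computation is this single-configuration probability, and it is where independence must be used carefully. Fix $w\in S_j$. Since $v\notin S_j$ and each $u_t\notin S_j$, all of the $k$ potential edges $(v,w),(u_1,w),\dots,(u_{k-1},w)$ join $w$ to a vertex of a different part, so each is present independently with probability $p$. Hence the event that $w$ is \emph{adjacent to $v$ but to none of the $u_t$} has probability $p(1-p)^{k-1}$. The covering event $N(v)\cap S_j\subseteq\bigcup_t N(u_t)$ holds precisely when \emph{no} $w\in S_j$ is of this bad type, and the bad events for distinct $w$ depend on disjoint edge sets, so they are mutually independent. Therefore
\[
\mathbb{P}\Big(N(v)\cap S_j\subseteq\textstyle\bigcup_{t}N(u_t)\Big)=\big(1-p(1-p)^{k-1}\big)^{\ell}.
\]

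Assembling the two union bounds gives
\[
\mathbb{P}\Big(\bigcup_v E_v\Big)\le k\ell\cdot(k-1)\cdot\frac{((k-1)\ell)^{k-1}}{(k-1)!}\cdot\big(1-p(1-p)^{k-1}\big)^{\ell},
\]
and I would simplify the polynomial prefactor to $\frac{k(k-1)^k\ell^k}{(k-1)!}\le\frac{k^{k+1}\ell^k}{(k-1)!}$. For the last factor, apply $1-x\le e^{-x}$ to get $\big(1-p(1-p)^{k-1}\big)^\ell\le \exp\!\big(-\ell p(1-p)^{k-1}\big)$, substitute $\ell p=2k\log\ell$, and use Bernoulli's inequality $(1-p)^{k-1}\ge 1-(k-1)p$ to bound $\ell p\big(1-(1-p)^{k-1}\big)\le 2k\log\ell\cdot(k-1)p\le 4k^3\log(\ell)^2\ell^{-1}$. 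Finally, absorbing the extra $\ell^{k}$ against the $e^{-2k\log\ell}=\ell^{-2k}$ term and using $\ell^{1/\ell}\ge 1$ to replace $\ell^{-1}$ by $\ell^{1/\ell-1}$ yields the stated bound $\frac{k^{k+1}}{(k-1)!}\exp\!\big(4k^3\log(\ell)^2\ell^{1/\ell-1}\big)\ell^{-k}$.

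The main obstacle is the independence bookkeeping in the single-configuration step: one must check that the constraints defining $E_v$ (that the $u_t$ avoid $S_j$ and $v$) are exactly what guarantees every relevant edge crosses between parts and hence has probability $p$, and that the per-vertex bad events are genuinely independent across $w\in S_j$. The remaining difficulty is purely computational, matching the somewhat unwieldy closed form; here the freedom in how one lower-bounds $\ell p(1-p)^{k-1}$ (Bernoulli versus $\log(1-p)$ estimates) accounts for the slightly lossy $\ell^{1/\ell}$ appearing in the target, which enters only as slack.
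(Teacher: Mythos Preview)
Your proof is correct, and in fact takes a cleaner route than the paper's own argument. The paper conditions on $X=|N(v)\cap S_j|$, bounds the binomial point mass $\mathbb{P}(X=x)$ by $\frac{(2k\log\ell)^x}{x!}\ell^{(x/\ell-1)2k}$, separately bounds the conditional covering probability (given $X=x$ and a fixed choice of $u_1,\dots,u_{k-1}$) by $((k-1)p)^x$, and then sums over $x$ to recognise the exponential series $\sum_x z^x/x!=e^z$; the factor $\ell^{1/\ell}$ in the final bound arises from that binomial estimate. You skip the conditioning entirely by observing that the single-configuration covering probability is \emph{exactly} $(1-p(1-p)^{k-1})^{\ell}$, and then you extract the same exponential form via $1-x\le e^{-x}$ and Bernoulli. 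This is more direct and yields a slightly sharper intermediate bound (with $\ell^{-1}$ in place of $\ell^{1/\ell-1}$), which you then relax to match the stated expression. Both approaches lean on the same independence fact---that the edges from $v$ and from the $u_t$'s into $S_j$ are independent because $v$ and the $u_t$ all lie outside $S_j$---but your version isolates it more transparently.
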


\begin{proof}
    Let $k$ be a fixed constant, let $\ell\geq 2$ be integer valued, let $p = \frac{2k\log(\ell)}{\ell}$,
    and 
    let $i$ and $v \in S_i \subseteq V(H(k,\ell,p))$ be fixed but arbitrary.
    For a fixed $j\neq i$, let $E_{v,j}$ be the event
    that there exists a set of vertices $u_1,\dots, u_{k-1} \in V(H(k,\ell,p))\setminus (S_j \cup \{v\})$
    such that 
    \[
    N(v) \cap S_j \subseteq \Big(\bigcup_{t=1}^{k-1} N(u_t) \Big).
    \]
    Note that by the symmetry of $H(k,\ell,p)$ the choice of $j$ does not effect the probability of the event $E_{v,j}$.
    Then, by the union bound $\mathbb{P}(E_v) \leq (k-1)\mathbb{P}(E_{v,j})$.
    So to bound the probability of $E_v$ it is sufficient to bound the probability of $E_{v,j}$ for a fixed $j$.

    Let $j$ be fixed.
    Let $X$ be the random variable that counts the cardinality of $N(v) \cap S_j$.
    Then $X$ has distribution $Bin(\ell,p)$.
    Hence,
    \begin{align*}
        \mathbb{P}(X = x) & = \binom{\ell}{x}p^x(1-p)^{\ell-x} \\
        & = (2k)^x\binom{\ell}{x}\log(\ell)^x \ell^{-x}\Big(1-\frac{2k\log(\ell)}{\ell} \Big)^{\ell-x} \\
        & \leq (2k)^x\frac{\ell^x}{x!}\log(\ell)^x \ell^{-x} \exp\Big(-2k\log(\ell) + \frac{2kx\log(\ell)}{\ell}\Big) \\
        & = \frac{(2k\log(\ell))^x}{x!}\ell^{(\frac{x}{\ell}-1)2k}.
    \end{align*}

    Let us now consider the probability of $E_{v,j}$ conditioned on the value of $X$.
    There are $\binom{(k-1)\ell-1}{k-1}$ choices of vertices $u_1,\dots, u_{k-1}$,
    and given a fixed choice of vertices $u_1,\dots, u_{k-1}$ 
    the probability of $N(v) \cap S_j \subseteq \Big(\bigcup_{t=1}^{k-1} N(u_t) \Big)$
    is at most $((k-1)p)^{X} = (\frac{2(k-1)k\log(\ell)}{\ell})^X$.

    By the law of total probability and the union bound 
    \[
     \mathbb{P}( E_{v,j}) \leq \binom{(k-1)\ell-1}{k-1} \sum_{x=0}^\ell \mathbb{P}\Big( E_{v,j} | X = x\Big)\mathbb{P}\Big(X = x\Big).
    \]
    So the product $\mathbb{P}( E_{v,j} | X = x)\mathbb{P}(X = x)$ is of interest.
    Let us consider this quantity; by the inequalities we have already shown
    \begin{align*}
        \mathbb{P}\Big( E_{v,j} | X = x\Big)\mathbb{P}\Big(X = x\Big) & \leq \Bigg(\frac{2(k-1)k\log(\ell)}{\ell} \Bigg)^x \Bigg(\frac{(2k\log(\ell))^x}{x!}\ell^{(\frac{x}{\ell}-1)2k} \Bigg) \\
        & < \ell^{-2k}\Bigg(\frac{2^{2x} k^{3x} \log(\ell)^{2x} \ell^{(\frac{1}{\ell}-1)x}}{x!} \Bigg).
    \end{align*}
    Plugging this into the earlier sum, and recalling the series definition of $e^z = \sum_{x=0}^\infty \frac{z^x}{x!}$,
    \begin{align*}
        \mathbb{P}( E_{v,j}) & \leq \binom{(k-1)\ell-1}{k-1} \sum_{x=0}^\ell \ell^{-2k}\Bigg(\frac{2^{2x} k^{3x} \log(\ell)^x \ell^{(\frac{1}{\ell}-1)x}}{x!} \Bigg) \\
        & < \ell^{-2k} \binom{(k-1)\ell-1}{k-1}  \sum_{x=0}^\infty \frac{\Big(4 k^{3} \log(\ell)^2 \ell^{(\frac{1}{\ell}-1)}\Big)^x}{x!} \\
        & = \ell^{-2k} \binom{(k-1)\ell-1}{k-1} \exp\Big(4 k^{3} \log(\ell)^2 \ell^{(\frac{1}{\ell}-1)}\Big) \\
        & < \ell^{-2k} \frac{(k\ell)^{k-1}}{(k-1)!} \exp\Big(4 k^{3} \log(\ell)^2 \ell^{(\frac{1}{\ell}-1)}\Big) \\
        & = \frac{k^{k-1}}{(k-1)!}\exp\Big(4 k^{3} \log(\ell)^2 \ell^{(\frac{1}{\ell}-1)}\Big)\ell^{-k-1}.
    \end{align*}
    Since $k$ is constant,
    observe the exponential term tends to $1$ as $\ell$ tends to infinity.
    Recalling $\mathbb{P}(E_v) \leq (k-1)\mathbb{P}(E_{v,j})$,
    we conclude that 
    \[
    \mathbb{P}(E_v) < \frac{k^{k}}{(k-1)!}\exp\Big(4 k^{3} \log(\ell)^2 \ell^{(\frac{1}{\ell}-1)}\Big)\ell^{-k-1}.
    \]
    Applying the union bound over all choice of $v$,
    the probability that there exists a vertex $v$ where event $E_v$ occurs is at most 
    \[
    \frac{k^{k+1}}{(k-1)!}\exp\Big(4 k^{3} \log(\ell)^2 \ell^{(\frac{1}{\ell}-1)}\Big)\ell^{-k}.
    \]
    This concludes the proof.
\end{proof}

We are now prepared to prove Theorem~\ref{Thm: Cop = Clique = k for all k}.
The bound  in Theorem~\ref{Thm: Cop = Clique = k for all k} may be far from tight, and no effort is made to optimizes the coefficient of $11$.
Notice that implicit in the following proof is that there exists infinity many graphs with $c(G) = \theta(G) = k$ for every $k$.

\begin{proof}[Proof of Theorem~\ref{Thm: Cop = Clique = k for all k}]
    When $k = 1$, $K_1$ is a graph with $c(K_1) = \theta(K_1) = 1$ on $1$ vertex, and when $k=2$, $C_4$ is a graph with $c(C_4) = \theta(C_4) = 2$ on $4$ vertices.
    Suppose then that $k\geq 3$ is constant.
    Let $\ell\geq 2$ be an integer which we will choose later, and let $p = \frac{2k\log(\ell)}{\ell}$.
    Let $H = H(k,\ell,p)$.
    By Lemma~\ref{Lemma: Max-degree random}, Lemma~\ref{Lemma: Clique = k in H}, and Lemma~\ref{Lemma: Always an escape in H} if $\ell$ is chosen so that 
    \begin{enumerate}
        \item $5k(k-1)\log(\ell) \leq \frac{\ell}{k-1}$, and
        \item $1 - k\ell^{1-\frac{3(k-1)k}{2}} > \frac{1}{2}$, and
        \item $1 - \frac{k^{k+1}}{(k-1)!}\exp\Big(4 k^{3} \log(\ell)^2 \ell^{(\frac{1}{\ell}-1)}\Big)\ell^{-k} > \frac{1}{2}$,
    \end{enumerate}
    then there is positive probability that $H$ simultaneously satisfies $\Delta(H) < \frac{k\ell}{k-1} - 1$, and $\theta(H) = k$, and no event $E_v$, as described in Lemma~\ref{Lemma: Always an escape in H}, occurs.

    One can verify, either analytically or using software, that if $\ell = 11\log(k)^2k^3$, then inequalities (1)-(3) are satisfied for all $k\geq 3$.
    Suppose then that $\ell = 11\log(k)^2k^3$ and let $G = H$ such that $\Delta(H) < \frac{k\ell}{k-1} - 1$, and $\theta(H) = k$, and no event $E_v$, as described in Lemma~\ref{Lemma: Always an escape in H}, occurs.
    Such a graph $G$ exists, since there is positive probability that $H$ has all of these properties simultaneously.
    Notice that $|V(G)| = k\ell = 11\log(k)^2k^4$.
    
    Since $\theta(G) = k$, $c(G) \leq k$.
    We claim that $c(G) > k-1$.
    Consider the Cops and Robbers game played with $k-1$ cops on $G$.
    Call these cops $C^{1}, \dots, C^{k-1}$ and 
    let $w_1, \dots, w_{k-1}$ be the initial positions of cops $C^{1}, \dots, C^{k-1}$.
    We will demonstrate a strategy for the robber that indefinitely evades capture.

    We begin by noting that $w_1, \dots, w_{k-1}$ is not a dominating set.
    To see this recall that $\Delta(G) < \frac{k\ell}{k-1} - 1$ and $|V(G)| = k\ell$.
    Hence, 
    \[
    \Big|\bigcup_{i=1}^{k-1} N[w_i]\Big| \leq (k-1)\Delta(G) < k\ell = |V(G)|.
    \]
    Thus, there exists a vertex $z \notin \bigcup_{i=1}^{k-1} N[w_i]$.
    Suppose the robber begins on $z$.
    Then the cops cannot capture the robber on their first turn, and the robber is guaranteed the opportunity to move at least once.

    Now, without loss of generality consider the game when it is the robber's opportunity to move.
    If there is no cop adjacent to the robber, then the robber does not move, and  the robber is guaranteed the opportunity to move at least once more.
    Suppose then that there is a cop adjacent to the robber.

    Let $u_1,\dots, u_{k-1}$ be the current location of the cops.
    Notice that multiple cops may be at the same vertex, but that this has no event on the following argument.
    Since there are strictly less than  $k$ cops, there is a $j$ such that $\{u_1,\dots, u_{k-1}\} \cap S_j = \emptyset$.
    Let the current location of the robber be $v \in S_i$.
    We consider the cases $j = i$ and $j\neq i$ separately.

    First consider if $j = i$.
    Then, no cop is on the clique $S_i$ which contains the robber.
    Then every cop has $\ell-1$ neighbours in a clique $S_q$ distinct from $S_i$.
    Since $\Delta(G) < \frac{k\ell}{k-1} - 1$, we conclude that
    \[
    \Big|\bigcup_{t=1}^{k-1} N[u_t] \cap S_i\Big| <  (k-1)\Big(\frac{k\ell}{k-1} - 1 - (\ell - 1) \Big) = \ell.
    \]
    Hence there exists a vertex $v' \in S_i $ not adjacent to any cop.
    Since the robber's current vertex $v$ is adjacent to a cop $v\neq v'$.
    So the robber moves to $v'$, which is possible since $S_i$ is a clique, and the robber is guaranteed the opportunity to move at least once more.

    Next, consider if $j\neq i$.
    Since event $E_v$ does not occur, and since no cop is in $S_j$ where $j\neq i$,
    \[
    N(v) \cap S_j \not\subseteq \Big(\bigcup_{t=1}^{k-1} N[u_t] \Big).
    \]
    Hence there exists a vertex $v' \in N(v) \cap S_j $ not adjacent to any cop.
    So the robber moves to $v'$, which is possible since $v$ and $v'$ are adjacent, and the robber is guaranteed the opportunity to move at least once more.

    We conclude that in every possible situation the robber has a strategy to ensure one more move before being captured.
    Hence, they can evade capture indefinitely.
    Thus, $k-1$ cops do not win, implying $c(G) = k = \theta(G)$.
    This completes the proof.
\end{proof}

\section{Finding Induced Cycles When $c(G) = \theta(G)$}

The goal of this section is to prove Theorem~\ref{Thm: Clique Cover = Cop Structure}.
Our proof proceeds by demonstrating that if $G$ is a graph with $c(G) = \theta(G) = k\geq 3$, 
then for all $3 \leq t \leq k+1$, $k-1$ cops can play so that the 
robber's only winning strategy is to take a walk which induces a cycles of length $t$.
Given  $c(G) > k - 1$, such a strategy must be possible for the robber to execute implying the existence of an induced $C_t$. 

To show the cops' strategy is feasible,
we must prove certain structure appears in graphs that satisfy $c(G) = \theta(G)\geq 3$.
For vertex subsets $S$ and $T$, we let $E(S,T)$ be the set of all edges with one end in $S$ and the other in $T$.

\begin{lemma}\label{Lemma: Complete-Clique-Decomposition}
    If $G$ is a connected graph with $c(G) = \theta(G) = k\geq 3$, and $K^{(1)}, \dots, K^{(k)}$ is a clique cover of $G$,
    then for all distinct $K^{(i)}, K^{(j)}$, the edge set $E(K^{(i)},K^{(j)})\neq \emptyset$.
\end{lemma}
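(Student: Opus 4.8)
The plan is to argue by contraposition: I will assume that some pair of clique-cover cliques has no edges between them, and then show that $k-1$ cops can win, contradicting $c(G) = k$. So suppose $K^{(1)}, \dots, K^{(k)}$ is a clique cover and, without loss of generality, that $E(K^{(1)}, K^{(2)}) = \emptyset$. The key observation is that $K^{(1)} \cup K^{(2)}$ is then a disjoint union of two cliques with no edges between them, so its complement inside $G$ behaves nicely: any robber vertex in $K^{(1)} \cup K^{(2)}$ together with the requirement of being dominated interacts with only the other $k-1$ cliques in a controlled way.

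First I would set up the cop strategy. The natural idea is that, since we have $k$ cliques but only need to ``handle'' $k-1$ of them once two cliques merge into a single easily-dominated region, we should station cops cleverly. Concretely, place one cop on each of the cliques $K^{(3)}, \dots, K^{(k)}$ — that uses $k-2$ cops — and use the remaining one cop to patrol the region $K^{(1)} \cup K^{(2)}$. The point is that a single cop can dominate a clique (by sitting on any of its vertices a cop dominates the whole clique), and because there are \emph{no} edges between $K^{(1)}$ and $K^{(2)}$, the robber cannot move directly from one of these two cliques to the other; it would have to pass through some $K^{(i)}$ with $i \geq 3$, each of which is guarded. I would make precise the claim that the lone cop assigned to $K^{(1)} \cup K^{(2)}$ can always reposition to dominate whichever of the two cliques the robber currently occupies, using the guarding cops on the other cliques to prevent the robber from slipping across during the repositioning moves.

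The main technical step — and the place I expect the real obstacle — is verifying that the guarding cops genuinely confine the robber. A cop sitting on a vertex $w \in K^{(i)}$ dominates all of $K^{(i)}$, so the robber can never safely enter $K^{(i)}$; this means the robber is trapped inside $K^{(1)} \cup K^{(2)}$ from the moment it is forced there. The subtlety is the transition dynamics: I must ensure the cops can first drive the robber into $K^{(1)} \cup K^{(2)}$ (or that the robber starting elsewhere is immediately dominated), and that while the patrolling cop moves between the two halves, the robber cannot exploit the one-move delay to escape through a guarded clique. Handling the timing carefully — showing that at every robber turn either the robber is already adjacent to a cop or can be forced to remain in a clique that will be dominated on the next cop move — is where the argument needs the most care.

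Finally, once the confinement is established, capture follows from the fact that a clique is cop-win: on the patrolling cop's region, once the robber is pinned to a single clique $K^{(1)}$ (say) with no exit, that cop moves onto the robber's clique, dominates it, and captures on the next move. Assembling these pieces shows $c(G) \leq k-1$, contradicting the hypothesis $c(G) = \theta(G) = k$. Hence no such empty edge set $E(K^{(i)}, K^{(j)})$ can exist, which is exactly the claim. I would expect the write-up to spend most of its length on the confinement invariant, since the domination and capture facts are standard.
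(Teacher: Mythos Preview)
Your proposal is correct and essentially identical to the paper's argument: both place one cop on each of $k-1$ cliques (including exactly one of the two non-adjacent cliques), observe that the robber is forced into and then permanently confined to the remaining unguarded clique since it cannot cross directly to the other, and have the free cop walk over (using connectedness) and capture. Your timing worries are unwarranted --- once the guarding cops are stationed, the robber is stuck in a single clique from the very first move and can never switch sides, so there is no ``one-move delay'' to exploit.
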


\begin{proof}
    Suppose $G$ admits a clique cover $K^{(1)}, \dots, K^{(k)}$ such that for some $K^{(i)}, K^{(j)}$, the edge set $E(K^{(i)},K^{(j)})= \emptyset$.
    Without loss of generality suppose $i=k-1$ and $j = k$.
    We claim that $c(G) < k$.

    To show this, we will provide a winning strategy for $k-1$ cops.
    For each $1\leq i \leq k-1$, begin a cop, call it $C^i$ in $K^{(i)}$.
    Since $K^{(1)}, \dots, K^{(k)}$ is a clique cover, the set of vertices non-adjacent to cops at the start of the game is a subset of $K^{(k)}$.
    So if the robber is not captured on their first turn they must being in $K^{(k)}$.
    Suppose then the robber beings on $K^{(k)}$.

    Given $E(K^{(k-1)},K^{(k)})= \emptyset$ if the robber wants to walk from it's current vertex in $K^{(k)}$ to $K^{(k-1)}$, then it must first enter a different clique $K^{(i)}$.
    Hence, if cops $C^1,\dots, C^{k-2}$ do not move from their respective cliques, the robber will be unable to leave $K^{(k)}$.
    This leaves the cop $C^{k-1}$ free to walk from $K^{(k-1)}$ to $K^{(k)}$ which is possible since $G$ is connected.
    Once $C^{k-1}$ arrives in $K^{(k)}$, they can capture the robber, since $K^{(k)}$ is a clique, and the robber cannot leave $K^{(k)}$ without being captured by one of the other cops.
\end{proof}

\begin{lemma}\label{Lemma: Robber First Move}
    Let $G$ be a connected graph with $c(G) = \theta(G) = k\geq 3$, and let $K^{(1)}, \dots, K^{(k)}$ be a clique cover of $G$.
    If $k-1$ cops, $C^i$ for $1\leq i \leq k-1$, occupy vertices $v_i \in K^{(i)}$ such that $N(v_i) \cap K^{(k)} \neq \emptyset$ at the start of the cops' turn,
    then a robber with a winning strategy must occupy a vertex $u \in K^{(k)}$ such that for all $i$, $N(u) \cap K^{(i)} \neq \emptyset$.
\end{lemma}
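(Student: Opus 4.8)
The plan is to argue by contraposition: I will show that if the robber occupies a vertex violating either stated condition while the cops sit at $v_1,\dots,v_{k-1}$, then these $k-1$ cops can force a capture, so no such vertex can be occupied by a robber following a winning strategy. Throughout I would assume without loss of generality that $K^{(1)},\dots,K^{(k)}$ is a partition of $V(G)$, since any clique cover can be refined to a partition into the same number of cliques. The first condition is quick: each cop $C^i$ sits at $v_i\in K^{(i)}$, and because $K^{(i)}$ is a clique, $C^i$ dominates all of $K^{(i)}$. Hence if the robber occupies any vertex of $K^{(i)}$ with $1\le i\le k-1$ at the start of the cops' turn, $C^i$ captures on this very move. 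So a winning robber must sit in the unique clique without a cop, namely $K^{(k)}$.

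The heart of the argument is the second condition. Suppose the robber is at $u\in K^{(k)}$ but, for contradiction, $N(u)\cap K^{(i)}=\emptyset$ for some $i\le k-1$; relabel so that $i=1$. The cop strategy splits the cops into confiners and one hunter. First I would keep $C^2,\dots,C^{k-1}$ fixed at $v_2,\dots,v_{k-1}$: since each of these cliques is dominated by its resident cop, any robber move into $K^{(2)}\cup\dots\cup K^{(k-1)}$ is answered by an immediate capture on the next cops' turn, so the robber is confined to $K^{(1)}\cup K^{(k)}$. Then the hunter $C^1$ moves from $v_1$ onto a vertex $w\in N(v_1)\cap K^{(k)}$, which exists by the hypothesis $N(v_1)\cap K^{(k)}\neq\emptyset$; now $C^1$ occupies $K^{(k)}$ and therefore dominates it.

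It then remains to check that the robber is trapped. On its turn the robber sits at $u$ with the hunter at $w\in K^{(k)}$, and since $N(u)\cap K^{(1)}=\emptyset$ we have $u\notin N(v_1)$, so $w\neq u$. The only neighbours of $u$ inside the confinement region $K^{(1)}\cup K^{(k)}$ lie in $K^{(k)}$, again precisely because $N(u)\cap K^{(1)}=\emptyset$. Thus every legal robber move either keeps it in $K^{(k)}$ — where it stays adjacent to $w$ and is captured on the next cops' turn — or sends it into one of the guarded cliques $K^{(2)},\dots,K^{(k-1)}$, where it is captured as above. Either way the robber loses, contradicting that it follows a winning strategy. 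This forces $N(u)\cap K^{(i)}\neq\emptyset$ for every $i$ with $1\le i\le k-1$, which is the claim.

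The hard part will be the confinement-and-timing bookkeeping rather than any computation. A region that is a union of two cliques can have cop number as large as $2$ in general (witness $C_4$), so a lone hunter cannot always clear $K^{(1)}\cup K^{(k)}$; the hypothesis $N(u)\cap K^{(1)}=\emptyset$ is exactly what breaks this obstruction, since it prevents the robber from stepping from $u$ into $K^{(1)}$ and makes the hunter's entry into $K^{(k)}$ immediately decisive. The care needed is in making this one-move window rigorous and in confirming that the confiners seal off $K^{(2)},\dots,K^{(k-1)}$ before the robber can reach a vertex of $K^{(k)}$ adjacent to $K^{(1)}$. I do not expect to need Lemma~\ref{Lemma: Complete-Clique-Decomposition} for this step, as the single cross edge into $K^{(k)}$ required by the strategy is supplied directly by the hypothesis on the $v_i$.
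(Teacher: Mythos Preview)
Your proposal is correct and follows essentially the same argument as the paper: the cop in the clique $K^{(j)}$ unreachable from $u$ steps into $K^{(k)}$ while the remaining cops stay put, leaving the robber with no safe move. The only difference is cosmetic---you relabel so that $j=1$ and phrase the argument in terms of a ``hunter'' and ``confiners,'' whereas the paper keeps the generic index $j$---and your closing remark that Lemma~\ref{Lemma: Complete-Clique-Decomposition} is not needed here is also accurate.
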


\begin{proof}
    Let $G$ be a connected graph with $c(G) = \theta(G) = k$, and let $K^{(1)}, \dots, K^{(k)}$ be a clique cover of $G$.
    Suppose at the start of the current cop turn, that for all $1\leq i \leq k-1$,
    a cop $C^i$ occupies a vertex $v_i \in K^{(i)}$ such that $N(v_i) \cap K^{(k)} \neq \emptyset$.
    Also suppose that the robber occupies a vertex $w$ such that $w \notin K^{(k)}$
    or $w \in K^{(k)}$ but there exists a $j$ such that, $N(u) \cap K^{(j)} =  \emptyset$.

    If $w \notin K^{(k)}$, then $w \in K^{(j)}$ for some $j \neq k$, since  $K^{(1)}, \dots, K^{(k)}$ is a clique cover of $G$.
    So the cop $C^j$ can capture the robber, given it is the cops turn and $C^j$ is also in the clique $K^{(j)}$.
    So such a case does not occur if the robber has a winning strategy.

    Otherwise, $w \in K^{(k)}$ but there exists a $j$ such that, $N(u) \cap K^{(j)} = \emptyset$.
    In this case, the cop $C^j$ moves to $K^{(k)}$ on their turn, while all other cops do not move.
    This is possible by our assumption $C^j$ begins on vertex $v_j$ which has neighbours in $K^{(k)}$.
    In response to this, the robber cannot move to a clique $K^{(i)}$ where $i\neq j,k$ without being captured, since cop $C^i$ will capture on the next cop turn.
    Similarly the robber cannot stay on $K^{(k)}$ without being captured by $C^j$ on the next cop turn.
    But moving directly to $K^{(j)}$ is impossible since we supposed $N(u) \cap K^{(j)} = \emptyset$.
    So again, the robber is captured, implying this case does not occur if the robber has a winning strategy.
    This completes the proof.
\end{proof}

\begin{lemma}\label{Lemma: There is a triangle}
    If $G$ is a connected graph with $c(G) = \theta(G) = k\geq 3$, then $G$ contains a triangle.
\end{lemma}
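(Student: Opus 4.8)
The plan is to argue by contradiction: I would assume $G$ is triangle-free and then exhibit a winning strategy for $k-1$ cops, contradicting $c(G)=k$. The first point is that triangle-freeness forces every clique in a clique cover $K^{(1)},\dots,K^{(k)}$ to have at most two vertices, since a clique on three vertices is a triangle. Now I set up the configuration handed to me by the earlier lemmas. By Lemma~\ref{Lemma: Complete-Clique-Decomposition}, for each $i\le k-1$ there is a vertex $v_i\in K^{(i)}$ with $N(v_i)\cap K^{(k)}\neq\emptyset$, so I place cop $C^i$ on $v_i$. By Lemma~\ref{Lemma: Robber First Move}, a robber with a winning strategy must then sit on some $u\in K^{(k)}$ having a neighbour in every $K^{(i)}$, $i\le k-1$. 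Because $G$ is triangle-free, $N(u)$ is independent, and an independent set meets the clique $K^{(i)}$ in at most one vertex; hence $u$ has exactly one neighbour $a_i$ in each $K^{(i)}$. I would then move each $C^i$ from $v_i$ to $a_i$ (legal, as $v_i,a_i$ lie in the clique $K^{(i)}$), so that every cop now sits on a neighbour of $u$.

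Next I analyse the robber's reply. Moving onto any $a_i$ is immediate capture, and $u\not\sim b_i$ for the other vertex $b_i$ of $K^{(i)}$ (else $u,a_i,b_i$ is a triangle), so the robber cannot enter any $K^{(i)}$ with $i\le k-1$; staying put is also lost. Hence the robber can survive only by moving to the unique remaining vertex $u'\in K^{(k)}$, which in particular forces $|K^{(k)}|=2$. Triangle-freeness again gives $u'\not\sim a_i$ for every $i$ (otherwise $u,u',a_i$ is a triangle), and the independence of $N(u')$ shows $N(u')$ meets each clique in at most one vertex, so $N[u']=\{u',u\}\cup\{b_i: u'\sim b_i\}$.

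The crux is to capture the robber at $u'$ using only the cops I already have. If $u'$ has no neighbour outside $K^{(k)}$, I move one cop from its $a_i$ onto $u$ (legal since $a_i\sim u$); the robber, whose sole neighbour is now the occupied $u$, is trapped and captured next turn, and cannot exploit the vacated clique because it has no neighbour there. Otherwise I pick $j$ with $u'\sim b_j$, move $C^j$ from $a_j$ to $b_j$, and keep every other cop fixed on its $a_i$. This cop set dominates $N[u']$: the cop at $b_j$ covers $u'$ and $b_j$; since $k-1\ge 2$, some cop remains on an $a_i$ adjacent to $u$ and covers $u$; and for each other $b_i\in N(u')$ the cop on $a_i$ covers $b_i$ because $a_i\sim b_i$. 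Whatever the robber does from $u'$ it is caught on the following cop turn, so $k-1$ cops win, contradicting $c(G)=k$ and proving $G$ contains a triangle.

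The step I expect to be the main obstacle is exactly this last capture. With only $k-1$ cops spread across the $k-1$ cliques $K^{(1)},\dots,K^{(k-1)}$, chasing the robber into its home clique $K^{(k)}$ appears to need a spare cop to pin the escape vertex $u'$. The resolution is to lean on triangle-freeness twice: it pins the robber's neighbourhood so tightly — one neighbour per clique, together with $u'\not\sim a_i$ and $u\not\sim b_i$ — that a single cop sliding within its own clique from $a_j$ to $b_j$ simultaneously covers $u'$, while the stationary cops continue to block every route back into the guarded cliques.
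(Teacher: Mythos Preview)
Your argument is correct, and it takes a genuinely different---and in fact cleaner---route than the paper's. The paper sets up an asymmetric initial placement: cop $C^{1}$ sits in $K^{(1)}$ seeing $K^{(k)}$, while the remaining cops sit in $K^{(2)},\dots,K^{(k-1)}$ seeing $K^{(1)}$; it then pushes the robber from $K^{(k)}$ into $K^{(1)}$ by sending $C^{1}$ into $K^{(k)}$, and only at that point invokes Lemma~\ref{Lemma: Robber First Move} (after relabelling the cliques) together with a small case analysis on whether the second vertex $u'$ of $K^{(1)}$ exists and where its neighbours lie. You instead invoke Lemma~\ref{Lemma: Robber First Move} immediately to pin the robber at a vertex $u\in K^{(k)}$ with a neighbour in every other clique, and then exploit triangle-freeness to see that $N(u)$ meets each $K^{(i)}$ in exactly one vertex $a_i$, so that sliding every cop onto its $a_i$ dominates $N[u]\setminus\{u'\}$ in one move. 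The endgame at $u'$ is then dispatched by the single local slide $a_j\to b_j$, using the key observations $u'\not\sim a_i$ and $u\not\sim b_i$ forced by triangle-freeness. Your approach avoids the relabelling trick and the three-way case split in the paper; the paper's approach, on the other hand, is closer in spirit to the induced-path argument used later for Theorem~\ref{Thm: Clique Cover = Cop Structure}, so it fits more naturally into that broader framework.
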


\begin{proof}
    Suppose $G$ is a connected triangle-free graph with $c(G) = \theta(G) = k\geq 3$.
    Then $G$ has a clique cover $K^{(1)}, \dots, K^{(k)}$ where every clique $K^{(i)}$ is isomorphic to $K_1$ or a $K_2$.
    Let  $K^{(1)}, \dots, K^{(k)}$ be such a clique cover.

    We claim prove $k-1$ cops can capture the robber in $G$.
    If each $K^{(i)}$ is isomorphic to $K_1$, then this is trivial, since the cops can stand on all but one vertex in the graph.
    Suppose then, without loss of generality that $K^{(k)}$ is isomorphic to $K_2$.
    By Lemma~\ref{Lemma: Complete-Clique-Decomposition}, for all $i\neq j$, $E(K^{(i)}, K^{(j)}) \neq \emptyset$.
    Then, there exists a vertex $v_1 \in K^{(1)}$ such that $N(v_1)\cap K^{(k)} \neq \emptyset$, let $v_1$ be such a vertex.
    Thus, there exists a vertex $w \in K^{(k)}$ adjacent to $v_1$.
    Let $w$ be such a vertex and let $u$ be the vertex in $K^{(k)}$ that is not $w$.
    Should $K^{(1)}$ be isomorphic to $K_2$ let $u'$ be the vertex in $K^{(1)}$ that is not $v_1$.
    For all $i\neq 1,k$, let $v_i \in K^{(i)}$ be a vertex with at least one neighbour in $K^{(1)}$.
    Such vertices exist by Lemma~\ref{Lemma: Complete-Clique-Decomposition}.
    
    Let $C^{1}, \dots, C^{k-1}$ be our set of cops
    and suppose that each cop $C^{(i)}$ begins on vertex $v_i$.
    The only vertex in $G$ which is possibly non-adjacent to a cop is $u$, so we suppose the robber starts at $u$.
    If $u$ is adjacent to $v_1$, then the starting position of the cops forms a dominating set, so suppose $u$ and $v_1$ are non-adjacent.
    On their first move the cops proceed via the following strategy:
    cop $C^{1}$ moves from $v_1$ to $w$, 
    while each cop $C^{i}$ where $i\neq 1$, moves from $v_i$ to $v_i'$ (which might be the same vertex),
    where $v'_i \in K^{(i)}$ 
    is a vertex defined as follows
    \begin{enumerate}
        \item if $u'$ does not exist, then $v'_i = v_i$, or
        \item if $u'$ exists and $N(u') \cap K^{(i)} \neq \emptyset$, then let $v'_i \in K^{(i)}$ be a neighbour of $u'$, or
        \item if $u'$ exists and $N(u) \cap K^{(i)} = \emptyset$, then let $v'_i = v_i$.
    \end{enumerate}
    
    Now we consider how the robber might respond. If the robber does not move or moves to $w$, then on the next cop turn the cop $C^{1}$ can capture the robber.
    If the robber moves to a clique $K^{(i)}$ where $i\neq 1,k$, then the cop $C^{i}$ can capture the robber on the next cop turn since $K^{(i)}$ is a clique.
    Thus, the robber must move to $K^{(1)}$.
    Recall $u$ and $v_1$ are non-adjacent,
    hence the robber must move to $u'$.
    So if $u'$ does not exist, then the cops have captured the robber.
    Otherwise, $u'$ exists implying every cop $C^{i}$ where $i\neq 1$ moved using case (2) or (3).
    If any cop $C^{i}$ where $i\neq 1$ moved using case (2), then this cop can capture the robber on the next cop turn.
    So suppose that all cops $C^{i}$ where $i\neq 1$ moved using case (3).

    In this case every clique $K^{(i)}$ where $i\neq 1$ contains a cop that occupies a vertices adjacent to some non-empty part of $K^{(1)}$, 
    while the robber occupies a vertex in $K^{(1)}$
    that has no neighbours in $K^{(2)}$, since $1 < 2< k$.
    Additionally, it is the cops turn to move.
    Since the labels of the cliques in the clique cover are arbitrary, Lemma~\ref{Lemma: Robber First Move} implies the cops have a strategy to capture the robber.
    So, $c(G) \leq k-1$ contradicting our assumption that $c(G) = k$.
    Therefore, no such graph $G$ exists, concluding the proof.
\end{proof}

\begin{lemma}\label{Lemma: There is a C4}
    If $G$ is a connected graph with $c(G) = \theta(G) = k\geq 3$, then $G$ contains an induced $C_4$.
\end{lemma}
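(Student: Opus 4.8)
The plan is to follow the method announced at the start of the section: I will describe a strategy for $k-1$ cops under which the robber can postpone capture forever only by walking around the vertices of an induced $C_4$. Because $c(G)=k>k-1$, the robber does possess an evading strategy, so such a walk must actually be available in $G$, and its four vertices give the desired induced cycle. I begin by fixing a clique cover $K^{(1)},\dots,K^{(k)}$ and, using Lemma~\ref{Lemma: Complete-Clique-Decomposition}, placing each cop $C^i$ (for $1\le i\le k-1$) on a vertex $v_i\in K^{(i)}$ that has a neighbour in $K^{(k)}$. Lemma~\ref{Lemma: Robber First Move} then guarantees that a surviving robber must start on a vertex $u\in K^{(k)}$ that is adjacent to every clique. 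The engine of the whole argument is the trivial but crucial observation that a single cop standing anywhere in a clique $K^{(i)}$ dominates all of $K^{(i)}$; hence the robber can never sit in a clique that contains a cop, and whenever cops occupy $k-1$ of the $k$ cliques the robber is trapped inside the unique cop-free clique.

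To make progress I dislodge the robber: since $v_1$ is adjacent to $K^{(k)}$, cop $C^1$ steps into $K^{(k)}$, where (as $K^{(k)}$ is complete) it immediately threatens $u$, while the remaining cops hold their cliques. Every clique except $K^{(1)}$ is now dominated and remaining in $K^{(k)}$ means capture, so by elimination the robber's safe move must carry it to a vertex $w_1\in K^{(1)}$ with $w_1\sim u$; writing $w_0=u$ and relabelling $K^{(1)}$ as the free clique, the configuration again satisfies the hypotheses of Lemma~\ref{Lemma: Robber First Move}, so I may iterate. Routing the successive invasions through distinct cliques $K^{(k)},K^{(1)},K^{(2)},\dots$ forces the robber along a walk $w_0,w_1,w_2,w_3,\dots$ in which each $w_{t+1}$ lies in the clique just vacated and is adjacent to $w_t$. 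To guarantee that this walk is induced rather than merely a closed walk with chords, each cop, once the robber has left the vertex it occupied, slides within its own clique onto that exact vacated vertex $w_t$; thereafter every later robber position must be non-adjacent to $w_t$, which is precisely what forbids the two diagonals and makes the eventual $4$-cycle induced. I then close the cycle by arranging the fourth move so that the robber's only safe option is a neighbour of $w_0$, producing the induced $C_4$ on $w_0,w_1,w_2,w_3$.

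The main obstacle is the orchestration hidden in the second paragraph. First, after each invasion I must verify that the remaining cops still lie adjacent to the newly opened clique, so that Lemma~\ref{Lemma: Robber First Move} can legitimately be reapplied; this forces me to fix, in advance, starting vertices $v_i$ and landing vertices with the right simultaneous adjacencies, which is where Lemma~\ref{Lemma: Complete-Clique-Decomposition} is invoked repeatedly. Second, and most delicately, I must show the strategy forces a cycle of length exactly four and not a shorter oscillation: the real danger is that the robber shuttles between two cliques along a single edge (producing only triangles) or escapes sideways into a clique I have failed to guard, and excluding these is exactly a counting argument balancing the $k-1$ cops against the $k$ cliques, the same balance that, pushed further, yields cycles of every length up to $k+1$ in the general theorem. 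One must also treat $k=3$ separately, where the four vertices cannot occupy four distinct cliques and two consecutive cycle vertices are instead forced to share a clique. Finally, the chord-exclusion step must be justified rigorously: one has to check that a cop can reach and hold the vacated vertex $w_t$ in time and that the robber is genuinely forced away from $N(w_t)$, not merely able to avoid it. This is logically equivalent to the remark that two non-adjacent vertices possessing two non-adjacent common neighbours already display the induced $C_4$ we are hunting; so at each step either the marking succeeds in barring the chord or such a quadruple surfaces directly, and formalising this dichotomy cleanly is the technical heart of the proof.
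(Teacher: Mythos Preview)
Your plan imports the iterated-invasion machinery that the paper reserves for the $C_{k+1}$ case of Theorem~\ref{Thm: Clique Cover = Cop Structure}, and for $C_4$ it is both far more than needed and, as you yourself flag, not actually complete. In particular the closing step is muddled: if the invasions march through $K^{(k)},K^{(1)},K^{(2)},\dots$ as you describe, then for $k\ge 4$ the robber's landing $w_3$ lies in $K^{(3)}$, not in any clique containing $w_0$, and nothing forces the edge $w_3w_0$. One can repair this by having the cop currently marking $w_0$ perform the third invasion (thus freeing $K^{(k)}$ again), but you do not say so, and then only three cliques are ever used anyway, so your separate $k=3$ case evaporates. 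The sliding-and-marking mechanism and the ``dichotomy'' you sketch for chord exclusion are likewise unnecessary here.

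The paper dispatches the lemma in a single cop move. Place $C^i$ on any $v_i\in K^{(i)}$ for $1\le i\le k-1$, choosing $v_1$ with a neighbour $w\in K^{(k)}$ (Lemma~\ref{Lemma: Complete-Clique-Decomposition}); the robber must start on some $u\in K^{(k)}\setminus N(v_1)$. Now only $C^1$ moves, to $w$; every clique except $K^{(1)}$ is dominated, so a surviving robber lands on some $u'\in K^{(1)}\setminus N(w)$. That is the whole argument: $\{v_1,w,u,u'\}$ already induces a $4$-cycle, since $v_1w$ and $uu'$ are edges by construction, $wu$ and $u'v_1$ are edges because each pair shares a clique, and the diagonals $v_1u$, $wu'$ are non-edges exactly because the robber was not captured at $u$ and at $u'$. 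The insight you are missing is that two of the four cycle vertices can be \emph{cop} positions; the robber need only supply two, so one exchange suffices and no marking, no iteration, and no case split on $k$ are required.
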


\begin{proof}
    Let $G$ be a connected graph with $c(G) = \theta(G) = k\geq 3$. Then $G$ has a clique cover $K^{(1)}, \dots, K^{(k)}$.
    Let  $K^{(1)}, \dots, K^{(k)}$ be such a clique cover.

    We claim that $c(G) > k-1$ implies $G$ contains an induced $C_4$.
    To see this we start $k-1$ cops $C^{i}$ on vertices
    $v_i \in K^{(i)}$ for all $1\leq i \leq k-1$.
    We suppose without loss of generality, see Lemma~\ref{Lemma: Complete-Clique-Decomposition}, that $N(v_1) \cap K^{(k)} \neq \emptyset$.
    Let $w \in N(v_1) \cap K^{(k)}$.
    Since $c(G) > k-1$ the robber has a winning strategy, and trivially, the cops' starting position forces the robber to being on a vertex $u\in K^{(k)}\setminus N(v_1)$.

    Consider the following first move by the cops.
    All cops other than $C^{1}$ remain on their current vertex, and cop $C^{1}$ moves to $w$.
    To avoid capture, the robber must move to a vertex $u'$ outside of $N(w) \cup K^{(2)} \cup \dots K^{(k-1)}$.
    Since $c(G) > k-1$ such a vertex $u'$ exists, and trivially, $u' \in K^{(1)}$.
    Since $u' \notin N(w)$ and $u \notin N(v_1)$, the vertices $\{v_1,w,u,u'\}$ induce $C_4$.
    This completes the proof.
\end{proof}

We are now prepared to prove Theorem~\ref{Thm: Clique Cover = Cop Structure}.

\begin{proof}[Proof of Theorem~\ref{Thm: Clique Cover = Cop Structure}]
Let $k\geq 3$ be a fixed but arbitrary integer, and let $G$ be a connected graph with $c(G) = \theta(G) = k$.
We aim to show that for all $3 \leq t\leq k+1$, $G$ contains an induced $C_t$.
If $k = 3$, then Lemma~\ref{Lemma: There is a triangle} and Lemma~\ref{Lemma: There is a C4} imply $G$ contains an induced $C_3$ and $C_4$ respectively, so the claim is true.
Suppose then that $k\geq 4$ while the claim holds for graphs with cop number equal to clique cover number equal to $k-1$.

Let $K^{(1)},\dots, K^{(k)}$ be a clique cover of $G$,
and let $G' = G - K^{(k)}$.
One can easily verify $\theta(G') = \theta(G) - 1 = k-1$.
Hence, $c(G') \leq k-1$.
To see that $c(G')\geq k-1$, notice that if $k-2$ cops can catch the robber on $G'$, then $k-1$ cops can catch the robber on $G$, 
by reserving one cop to stand on $K^{(k)}$, thereby preventing the robber from entering $K^{(k)}$, while the remaining $k-2$ cops capture the robber on the vertices of $G'$.
Thus, $c(G') = k-1$.

By induction, for every $3 \leq t \leq k$, $G'$ contains an induced $C_t$.
Given $G'$ is an induced subgraph of $G$, this implies $G$ contain an induced $C_t$ for every $3 \leq t \leq k$.
Then all that remains for us to show is that $G$ contains an induced $C_{k+1}$.

    We claim that $c(G) > k-1$ implies $G$ contains an induced $C_{k+1}$.
    Suppose for contradiction that there exists an $i$ where for all vertices $v \in K^{(i)}$, there exists a $j$ such that $N(v)\cap K^{(j)} = \emptyset$.
    Without loss of generality with respect to the labels of the cliques in our clique cover, suppose $K^{(1)}$ is a clique with this property.
    Then, Lemma~\ref{Lemma: Robber First Move} implies 
    that 
    if $k-1$ cops, $C^i$ for $1\leq i \leq k-1$, occupy vertices $v_i \in K^{(i+1)}$ such that $N(v_i) \cap K^{(1)} \neq \emptyset$ at the start of the game,
    then the robber will lose, since there is no vertex 
    $u \in K^{(1)}$ such that for all $i$, $N(u) \cap K^{(i)} \neq \emptyset$.
    Observe that by Lemma~\ref{Lemma: Complete-Clique-Decomposition} it is possible for $k-1$ cops, $C^i$ for $1\leq i \leq k-1$ to occupy vertices $v_i \in K^{(i+1)}$ such that $N(v_i) \cap K^{(1)} \neq \emptyset$ at the start of the game.
    Thus, we have shown $c(G) \leq k-1$, contradicting $c(G) = k$.
    So, for all $i$ there exists a vertex $v_i \in K^{(i)}$, where for all $j$, $N(v_i)\cap K^{(j)} = \emptyset$.
    For $1\leq i \leq k-1$ let $v_i \in K^{(i+1)}$ be such a vertex.

    Imagine we start a cop $C^{i}$ on vertex
    $v_i \in K^{(i+1)}$ for all $1\leq i \leq k-1$.
    Since $c(G) > k-1$, the robber has a winning strategy when playing against these cops.
    Suppose the robber begins on a vertex $u_1$ while following a winning strategy.
    Then by our choice of $v_1,\dots, v_{k-1}$, Lemma~\ref{Lemma: Robber First Move} implies $u_1 \in K^{(1)}$ and for all $j$, $N(u_1) \cap K^{(j)} \neq \emptyset$.

    Let $1\leq t \leq k$.
    We imagine the game proceeds in rounds, where a round beings on the cop's turn, a series of moves by both players occurs, then the round concludes following a robber turn.
    The vertex $u_{t}$ denotes the location of the robber at the start of round $t$.
    Suppose at the start of round $1$, each cop $C^i$ is located at $v_i$. 
    This describes the starting position of the game.
    We will demonstrate a cop strategy that ensures at the start of round $t>1$,  that for all $1 \leq i \leq t-2$, cop $C^i$ is located at $u_i$, and for all $j\geq t$, cop $C^j$ is located at $v_j$,
    while we suppose that cop $C^{t-1}$ occupies a vertex $x_{t-1}\in K^{(t-1)}$ adjacent to $v_{t-1}$.
    We induct, on the round number, so that for all $1\leq i \leq t$, and $u_{i} \in K^{(i)}$ and for all $j$, $N(u_{i}) \cap K^{(j)} \neq \emptyset$.
    We have already established this claim holds when $t=1$.

    With the game thus far having been played this way,
    we describe the cops' strategy in round $1\leq t < k$.
    The cops move as follows. 
    Cop $C^{t-1}$ moves from $x_{t-1}$ to $u_{t-1}$ which is possible since both vertices are in $K^{(t-1)}$ and $K^{(t-1)}$ is a clique.
    Cop $C^t$ moves from $v_t$ to some neighbour of $v_t$ in $K^{(t)}$ which we call $x_t$. 
    Such a vertex exists by our choice of $v_t$.
    Since the cops cannot capture the robber, $x_t \neq u_t$.
    All other cops do not move.
    In response Lemma~\ref{Lemma: Robber First Move} implies the robber must move to a vertex $u \in K^{(t+1)}$ where for all $j$, $N(u) \cap K^{(j)} \neq \emptyset$.
    Following the robber's move to $u$, the cops end the round which sets $u_{t+1} = u$.
    This satisfies our induction statement.

    Recalling that $c(G) > k-1$, the robber will not be captured during any round, regardless of the cops' strategy.
    Since the robber will not be captured on the cops' turn during any round,
    $u_t$ is non-adjacent to all vertices $u_1,\dots, u_{t-2}$, given those vertices are occupied by cops at the start of round $t$.
    The cops' strategy also implies that each $u_t$ is adjacent to $u_{t+1}$, for $1\leq t < k$.
    Thus, $\{u_1,\dots, u_{k}\}$ induces $P_k$.

    Now consider the game in round $k$.
    Cops occupy vertices $u_1,\dots, u_{k-2},x_{k-1}$ while the robber occupies vertex $u_k \in K^{(k)}$.
    The cops move as follows. 
    Cop $C^{k-1}$, which currently occupies vertex $x_{k-1} \in K^{(k-1)}$, moves to $u_{k-1}$
    which is possible since both vertices are in $K^{(k-1)}$ and $K^{(k-1)}$ is a clique.
    Cop $C^1$, which currently occupies vertex $u_1$, moves to a vertex $x_k \in K^{(k)}$.
    Such a vertex exists by our proof that for all $j$, $N(u_1)\cap K^{(j)} \neq \emptyset$.
    All other cops do not move.
    In response Lemma~\ref{Lemma: Robber First Move} implies the robber must move to a vertex $w \in K^{(1)}$.
    Since the robber is not captured on the next cop turn, $w$ is not adjacent to any vertex $u_2,\dots, u_{k-1}$.
    So $w\neq u_1$, since $u_1$ and $u_2$ are adjacent.
    Now $w$ is adjacent to $u_1$, since they lie in a clique together, and $u_k$ is adjacent to $w$ since the robber moves from $u_k$ to $w$.
    Since, $\{u_1,\dots, u_{k}\}$ induces $P_k$ such that each $u_t$ is adjacent to $u_{t+1}$,
    we conclude that $\{u_1,\dots, u_{k},w\}$ induces $C_{k+1}$.
    This completes the proof.
\end{proof}

\section{Typical Graphs with No Short Induced Cycle}

In this section we will prove Theorem~\ref{Coro: typical C_l-free Graphs}.
That is, under the assumption that almost all $3K_1$-free graphs and $2K_2$-free graphs have $\chi = \omega$,
then for $\ell$ equal to $3$ or $4$, we will prove that almost every $C_\ell$-free graph $G$ has $c(G) < \alpha(G)$.
This assumption is not know to be true in the literature, 
however it is reasonable to expect it may be true given  the following theorem of Reed and Yuditsky \cite{reed2025asymptotic}.

\begin{theorem}\label{Thm: R&Y}
    Let $H$ be a tree or a cycle $C_\ell$ where $\ell\neq 6$, then almost every $H$-free graph $G$ has $\chi(G) = \omega(G)$.
\end{theorem}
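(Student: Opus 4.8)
The plan is to reduce the statement to the known \emph{typical structure} of hereditary graph classes and then to a packing--covering duality for a random graph built from an optimal colouring template. Write $\mathrm{Forb}(H)$ for the class of (induced) $H$-free graphs, call a graph $(s,t)$-colourable if its vertices partition into $s$ cliques and $t$ independent sets, and set
\[
r(H) = \max\{\, s+t : H \text{ is not } (s,t)\text{-colourable}\,\}.
\]
By the Alekseev--Bollob\'as--Thomason theorem, the number of labelled graphs in $\mathrm{Forb}(H)$ on $n$ vertices is $2^{(1-1/r(H)+o(1))\binom n2}$, and the extremal constructions are exactly the graphs obtained from an \emph{optimal template} $(s,t)$ (one attaining the maximum) by filling in the edges \emph{between} the $s+t$ parts freely. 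First I would invoke the fine structural refinements in this area (in the spirit of Pr\"omel--Steger and Balogh--Bollob\'as--Simonovits) to upgrade the count to structure: almost every $G\in\mathrm{Forb}(H)$ admits a partition into $r=r(H)$ parts that is $(s,t)$-coloured according to \emph{some} optimal template, after setting aside a controlled ``defect'' set of $o(n)$ vertices and a sparse set of edges incident to the rest.

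Next I would compute the optimal templates for the two families. Since a tree $T$ is bipartite it is $(0,2)$-colourable, so for $r(T)\ge 2$ every optimal template uses at least one clique part; for short trees ($r(T)\le 2$) the templates lie among the independent, clique, bipartite, split and co-bipartite patterns, all of which define \emph{perfect} classes, while for longer trees $r(T)$ grows and the templates become complements of multipartite graphs (e.g.\ $P_7$ has the co-tripartite template). The cycles behave the same way: $r(C_\ell)$ grows with $\ell$ and the templates are (co-)multipartite. In every case I would reformulate the target by passing to complements: for a graph $R$ realised from a template, $\chi(R)=\omega(R)$ is equivalent to $\theta(\bar R)=\alpha(\bar R)$; since $\theta(\bar R)\ge\alpha(\bar R)$ always, the content is to produce a clique cover of $\bar R$ of size exactly $\alpha(\bar R)$, equivalently a proper colouring of $R$ with $\omega(R)$ colours.

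The heart of the proof is therefore a packing--covering duality for the random template graph: if $R$ is obtained from an optimal template by choosing the between-part edges independently and uniformly, then whp $\chi(R)=\omega(R)$. For templates with $r=2$ this is immediate, since bipartite, split and co-bipartite graphs are perfect; in the co-bipartite case it is exactly K\"onig's theorem applied to the bipartite complement $\bar R$, giving $\alpha(\bar R)=\theta(\bar R)$. For larger $r$ the template graph is typically \emph{not} perfect (it contains an induced $C_5$ whp), so perfection cannot be used; instead I would argue directly that the extremal independent set of $\bar R$ extends to a clique cover of the \emph{same} cardinality, via a near-perfect clique-factor/absorption argument on the random multipartite structure together with sharp concentration for $\omega(R)$ and $\alpha(\bar R)$. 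Finally I would transfer the duality back through the structural approximation, checking that the $o(n)$ defect vertices and sparse defect edges move neither $\chi$ nor $\omega$ beyond the available slack, forcing exact equality for almost every $G\in\mathrm{Forb}(H)$.

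The hard part will be precisely this \emph{exact} matching of $\chi$ and $\omega$ (as opposed to an asymptotic estimate), where a single misaligned vertex or defect edge can shift one parameter by one. This is also where the excluded value $\ell=6$ enters: $C_6$ is the boundary cycle for which the set of optimal templates (here the split and co-bipartite patterns, with the bipartite pattern killed by $C_6$ itself being bipartite) together with the associated defect structure fail the stability needed to pin $\chi$ and $\omega$ together, so almost-all-$C_6$-free graphs cannot be shown to satisfy $\chi=\omega$ by this route. Controlling the defect edges so that they never create an odd hole or antihole separating $\chi$ from $\omega$, uniformly over the competing optimal templates, is thus the crux of the argument.
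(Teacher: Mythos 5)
This statement is not proved in the paper at all: it is quoted verbatim as a result of Reed and Yuditsky \cite{reed2025asymptotic}, and the paper uses it only as a black box motivating Theorem~\ref{Coro: typical C_l-free Graphs}. So there is no internal proof to compare against, and your proposal has to be judged on whether it stands alone as a proof of the Reed--Yuditsky theorem. It does not. What you give is a correct identification of the standard framework (Alekseev--Bollob\'as--Thomason counting, Pr\"omel--Steger/Balogh--Bollob\'as--Simonovits typical structure relative to optimal $(s,t)$-templates) followed by an assertion of exactly the step that constitutes the theorem. The structural refinements you invoke only pin down almost every $H$-free graph up to a defect set of $o(n)$ vertices and a sparse set of defect edges, and such defects can shift $\chi$ and $\omega$ independently by up to $o(n)$ --- not ``by one,'' as your final paragraph suggests. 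Your plan handles this with the phrases ``near-perfect clique-factor/absorption argument'' and ``checking that the defect vertices and edges move neither $\chi$ nor $\omega$ beyond the available slack,'' but no such argument is sketched, and forcing \emph{exact} equality $\chi=\omega$ against these defects is precisely the content of the Reed--Yuditsky paper. You acknowledge this is ``the crux''; acknowledging the crux is not supplying it.

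A concrete symptom that the mechanism is missing is your treatment of the excluded case $\ell=6$. By your own template computation, the optimal templates for $C_6$-free graphs are the split and co-bipartite patterns, both of which define \emph{perfect} classes; so on the logic of your $r=2$ paragraph (``bipartite, split and co-bipartite graphs are perfect, hence $\chi=\omega$ is immediate''), your argument would go through for $C_6$ as well, i.e.\ it proves too much. The actual obstruction for $C_6$ must therefore live in the defect structure that your sketch never analyzes, which shows that the part of the proof you left unspecified is exactly the part that distinguishes the true theorem from a false strengthening. Separately, several template claims (e.g.\ the co-tripartite template for $P_7$, the growth of $r(C_\ell)$, and concentration of $\omega$ for random template graphs) are stated without verification; these are plausible but would each need proof. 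As it stands, the proposal is a reasonable research roadmap, not a proof.
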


To prove Theorem~\ref{Coro: typical C_l-free Graphs} we require some preliminary lemmas.
We denote the Ramsey number for parameters $k$ and $t$ by $R(k,t)$.

\begin{lemma}\label{Lemma: alpha>2 Tri-free}
    All triangle-free graphs with at least $6$ vertices have $\alpha(G) \geq 3$.
\end{lemma}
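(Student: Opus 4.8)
The statement to prove is Lemma~\ref{Lemma: alpha>2 Tri-free}: every triangle-free graph on at least $6$ vertices has $\alpha(G) \geq 3$.

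The plan is to use the Ramsey number $R(3,3) = 6$, which the paper has already set up notation for. Recall that $R(3,3) = 6$ means that any graph on at least $6$ vertices contains either a triangle ($K_3$) or an independent set of size $3$ (equivalently, a triangle in the complement). First I would state this directly: if $G$ has at least $6$ vertices, then by the definition of the Ramsey number $R(3,3)=6$, either $G$ contains a triangle or $\bar{G}$ contains a triangle.

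The key step is to use the triangle-free hypothesis to eliminate the first alternative. Since $G$ is triangle-free, $G$ contains no $K_3$, so the Ramsey-theoretic dichotomy forces $\bar{G}$ to contain a triangle. A triangle in $\bar{G}$ is precisely a set of three pairwise non-adjacent vertices in $G$, i.e.\ an independent set of size $3$ in $G$. Hence $\alpha(G) \geq 3$, as desired.

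There is essentially no obstacle here, since this is an immediate consequence of the value $R(3,3)=6$; the only thing to be careful about is invoking the correct Ramsey number and correctly translating ``triangle in the complement'' into ``independent set of size three.'' If one wished to avoid citing $R(3,3)=6$ as a black box, the alternative would be a short direct argument: pick any vertex $v$; in a triangle-free graph its neighborhood is independent, so if $\deg(v)\geq 3$ we are done, and otherwise every vertex has degree at most $2$, whence on $6$ vertices one can find three pairwise non-adjacent vertices by a counting or case argument. But the cleaner route is simply to cite $R(3,3)=6$, and that is the approach I would take.
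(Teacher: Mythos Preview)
Your proof is correct and is essentially the same as the paper's: both invoke $R(3,3)=6$, with the paper phrasing it as the contrapositive (a triangle-free graph with $\alpha(G)\le 2$ has fewer than $R(3,3)=6$ vertices) and you phrasing it directly.
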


\begin{proof}
    If $G$ is a triangle-free graph with independence number at most $2$, then $G$ has $\omega(G),\alpha(G) \leq 2$.
    Thus, $|V(G)|< R(3,3)$.
    It is well known, and elementary to show, that $R(3,3)= 6$.
    Hence, such a graph $G$ has at most $5$ vertices.
\end{proof}

\begin{lemma}\label{Lemma: theta=2 C4-free}
    If $G$ is a connected graph $C_4$-free graph with $\theta(G)\leq 2$, then $c(G) = 1$.
\end{lemma}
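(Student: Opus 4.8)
The plan is to show that $G$ is a \emph{cop-win} graph (i.e.\ $c(G)=1$) by exhibiting a dismantling order, appealing to the standard characterization that a finite graph has cop number $1$ if and only if it can be reduced to a single vertex by repeatedly deleting \emph{corners} — vertices $u$ for which some $v\neq u$ satisfies $N[u]\subseteq N[v]$; see \cite{bonato2011game}. Since deleting a corner preserves connectivity, $C_4$-freeness (an induced-hereditary property), and the bound $\theta\leq 2$, it suffices to prove that every connected $C_4$-free graph with $\theta(G)\leq 2$ and $|V(G)|\geq 2$ contains a corner, and then induct on $|V(G)|$.

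To find a corner I would first fix a partition of $V(G)$ into two (possibly empty) disjoint cliques $A$ and $B$, which exists because $\theta(G)\leq 2$. For $a\in A$ write $N_B(a)=N(a)\cap B$, so that $N[a]=A\cup N_B(a)$ since $A$ is a clique. The key structural step is the claim that for any two vertices $a,a'\in A$ the sets $N_B(a)$ and $N_B(a')$ are comparable under inclusion. Indeed, if neither contained the other there would be $b\in N_B(a)\setminus N_B(a')$ and $b'\in N_B(a')\setminus N_B(a)$; these are distinct vertices of $B$, so $bb'$ and $aa'$ are edges, while $ab$ and $a'b'$ are edges and $ab'$ and $a'b$ are non-edges, whence $\{a,b,b',a'\}$ induces a $C_4$, contradicting $C_4$-freeness. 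The symmetric statement holds for the $A$-neighborhoods of vertices of $B$.

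Consequently, whenever $|A|\geq 2$ the neighborhoods $\{N_B(a):a\in A\}$ form a chain, so there are distinct $a,a'\in A$ with $N_B(a)\subseteq N_B(a')$; then $N[a]=A\cup N_B(a)\subseteq A\cup N_B(a')=N[a']$, making $a$ a corner. By symmetry the same holds when $|B|\geq 2$. Since $|V(G)|\geq 2$, at least one of $A,B$ has at least two vertices unless $G\cong K_2$, in which case each vertex is a corner. This produces the required corner in every case.

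Finally I would run the induction: the base case $|V(G)|=1$ has cop number $1$; for the inductive step, delete a corner $u$ dominated by $v$. The graph $G-u$ remains connected (any walk through $u$ reroutes through $v$), is $C_4$-free, and has $\theta(G-u)\leq 2$ by restricting the clique partition, so by induction $c(G-u)=1$, whence $c(G)=1$. The main point requiring care is the structural claim that $C_4$-freeness forces the cross-neighborhoods between the two cliques to form a chain — everything else is bookkeeping — together with verifying that corner deletion keeps the graph inside the class so that the induction is legitimate.
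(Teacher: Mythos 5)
Your proof is correct and takes essentially the same route as the paper's: both arguments hinge on the key observation that two vertices of one clique whose neighborhoods in the other clique are inclusion-incomparable would yield an induced $C_4$, so a corner (a vertex $u$ with $N[u]\subseteq N[v]$) must exist. The only difference is organizational --- you dismantle by explicit induction on $|V(G)|$, where the paper argues via a smallest counterexample containing no corner --- and this does not change the substance.
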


\begin{proof}
    Suppose the statement is false.
    Let $G$ be a smallest connected $C_4$-free graph with $\theta(G) \leq 2$ and $c(G) \neq 1$.
    Then $\theta(G) =2$ since $1< c(G) \leq \theta(G) \leq 2$.
    Furthermore, for all induced subgraphs $H$ of $G$ not equal to $G$,  $c(H) = 1$, since $H$ is also $C_4$-free.

    A standard argument in Cops and Robbers, see for instance \cite{bonato2011game}, 
    is that if there exists vertices $u$ and $v$ where $N[u]\subseteq N[v]$ in a graph $Q$,
    then $c(Q) = c(Q-u)$.
    In this case the vertex $u$ is called a corner, and it is never advantageous for either player to move to $u$, hence removing $u$ does not effect the game.
    Since $G$ is a smallest counter-example $G$ contains no corner.

   Let $K^{(1)},K^{(2)}$ be a clique cover of $G$. If $K^{(1)}$ or $K^{(2)}$ is isomorphic to $K_1$, then $G$ trivially contains a corner.
   Hence, $|K^{(1)}|\geq 2$ and $|K^{(2)}|\geq 2$.
   Similarly, if there exists a vertex in either clique, without neighbours in the opposite clique, then this vertex is a corner.
   So every vertex in $K^{(1)}$ has a neighbour in $K^{(2)}$ and visa-versa.
   
   Let $u,v \in K^{(1)}$ be distinct vertices.
   Since $u$ and $v$ are both in $K^{(1)}$, while $G$ contains no corner, $N(u) \not\subseteq N(v)$ and $N(v) \not\subseteq N(u)$.
   Hence there is a vertex $x \in N(u) \setminus N(v)$ and a vertex $y \in N(v) \setminus N(u)$.
   Immediately, $x,y \in K^{(2)}$, so $x$ and $y$ are adjacent.
   But this implies $\{u,x,y,v\}$ induces $C_4$, contradicting that $G$ is $C_4$-free.
   We conclude that every $C_4$-free graph with $\theta(G) \leq 2$ has $c(G) = 1$.
\end{proof}

We are now prepared to prove Theorem~\ref{Coro: typical C_l-free Graphs}.
\begin{proof}[Proof of Theorem~\ref{Coro: typical C_l-free Graphs}]
Assume that almost all $3K_1$-free graphs and $2K_2$-free graphs have $\chi = \omega$.
We consider triangle-free graphs and $C_4$-free graphs separately.

Suppose $G$ is a connected triangle-free graph with $\theta(G) = \alpha(G) \geq 3$.
Since $G$ has no triangle, Lemma~\ref{Lemma: There is a triangle} implies that $c(G) < \theta(G) = \alpha(G)$.
Notice that $\bar{C_3} = 3K_1$.
Since Lemma~\ref{Lemma: alpha>2 Tri-free} implies all but finitely many triangle-free graphs $G$ have $\alpha(G)\geq 3$,
and since we assume almost all $3K_1$-free graphs have $\chi = \omega$, 
almost every triangle-free graph $G$ has $\theta(G) = \chi(\bar{G}) = \omega(\bar{G}) = \alpha(G)$.
Thus,
we have shown almost every triangle-free graphs has $c(G) < \alpha(G)$.

Now consider a connected $C_4$-free graph with $\theta(G) = \alpha(G) \geq 2$.
If $\alpha(G)\geq 3$, then since $G$ has no induced $C_4$, Lemma~\ref{Lemma: There is a C4} implies that $c(G) < \theta(G) = \alpha(G)$.
Otherwise $\alpha(G) = \theta(G) = 2$, and Lemma~\ref{Lemma: theta=2 C4-free} implies $c(G) = 1 < 2 = \alpha(G)$.
Trivially, the set of graphs with independence number $1$, that is cliques, make up a negligible (i.e. tending to zero as $n$ tends to infinity) proportion of $n$ vertex $C_4$-free graphs.
Notice that $\bar{C_4} = 2K_2$.
Then, since we assume almost all $2K_2$-free graphs have $\chi = \omega$, 
almost every $C_4$-free graph $G$ has $\theta(G) = \chi(\bar{G}) = \omega(\bar{G}) = \alpha(G)\geq 2$.
Thus, almost all $C_4$-free graphs $G$ have $c(G) < \alpha(G)$.
This completes the proof.
\end{proof}

\section{Future Work}

We conclude the paper by leaving some problems for future work.
The first problem concerns perfect graphs.
Given Theorem~\ref{Coro: Perfect Graphs}
we know there is no perfect graph with cop number equal to independence number greater than or equal to $4$.
We also know that 
$C_4$ is a perfect graph with cop number and clique cover number $2$.

\begin{problem}
    Prove that every connected perfect graph $G$ with $\alpha(G) = 3$
    is $2$-cop-win, or demonstrate a perfect graph $G$ with $c(G) = \alpha(G) = 3$.
\end{problem}

The next problem also focuses on perfect graphs.
Here we are interested in how much the upper bound $c(G) \leq \alpha(G)-1$ can be improved for perfect graphs with large independence number.
To motivate this problem observe that bipartite graphs (which are necessarily perfect) all have cop number at most $o(\alpha)$.
To see this note that $\alpha\geq n/2$ for all bipartite graphs, while it has been shown by numerous authors that all connected graphs have cop number at most $o(n)$.

\begin{problem}
    Given an integer $k$, what is the least integer $f(k)$ such that every connected perfect graph $G$ with $\alpha(G) = k$,
    has $c(G) \leq f(k)$?
\end{problem}

We are also interested in determining if almost every graph with a forbidden induced cycle has cop number strictly less than independence number.
To this end we make the following conjecture.

\begin{conjecture}
    For all $\ell\geq 3$, almost every $C_\ell$-free graph $G$ has $c(G) < \alpha(G)$.
\end{conjecture}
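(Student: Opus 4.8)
The plan is to prove the conjecture by the same mechanism that establishes Theorem~\ref{Coro: typical C_l-free Graphs}, using the structural Theorem~\ref{Thm: Clique Cover = Cop Structure} as the engine. Fix $\ell \geq 3$ and restrict attention to connected graphs (almost every $C_\ell$-free graph is connected, which I would record as a short preliminary lemma). Arguing by contradiction, suppose a connected $C_\ell$-free graph $G$ satisfies $c(G) = \alpha(G)$. Since $c(G) \leq \alpha(G) \leq \theta(G)$ always holds, it suffices to show that for almost every such $G$ one has both $\theta(G) = \alpha(G)$ and $\alpha(G) \geq \max\{3,\ell-1\}$: for then $c(G) = \theta(G) = \alpha(G) =: k \geq 3$ with $\ell \leq k+1$, so Theorem~\ref{Thm: Clique Cover = Cop Structure} produces an induced cycle of length $\ell$, contradicting $C_\ell$-freeness. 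Hence $c(G) < \alpha(G)$ for almost every $C_\ell$-free graph.

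Two typical-structure inputs are therefore needed. First, that $\alpha(G) \geq \max\{3,\ell-1\}$ for almost every $C_\ell$-free graph $G$: the $C_\ell$-free graphs of bounded independence number (such as cliques, which are $C_\ell$-free for $\ell \geq 4$) form a vanishing fraction of all $C_\ell$-free graphs, so I would deduce $\alpha \to \infty$ from the known coarse structure of typical $C_\ell$-free graphs, the case $\ell = 3$ being covered by Lemma~\ref{Lemma: alpha>2 Tri-free}. Second, and more substantially, that $\theta(G) = \alpha(G)$ for almost every $C_\ell$-free graph $G$. Passing to complements, $G$ is $C_\ell$-free if and only if $\bar{G}$ is $\bar{C_\ell}$-free, while $\theta(G) = \chi(\bar{G})$ and $\alpha(G) = \omega(\bar{G})$; so this input is exactly the statement that almost every $\bar{C_\ell}$-free graph $H$ has $\chi(H) = \omega(H)$.

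The value of $\ell$ determines whether this second input is available. When $\ell = 5$ one has $\bar{C_5} = C_5$, so Theorem~\ref{Thm: R&Y} applies directly (as $5 \neq 6$) and the argument goes through unconditionally; combined with the first input, this already settles $\ell = 5$. When $\ell \in \{3,4\}$ the relevant forbidden graph is a forest, namely $\bar{C_3} = 3K_1$ and $\bar{C_4} = 2K_2$, which Theorem~\ref{Thm: R&Y} does not cover, and this is precisely the hypothesis assumed in Theorem~\ref{Coro: typical C_l-free Graphs}. For $\ell \geq 6$, however, $\bar{C_\ell}$ is neither a tree, a forest, nor a cycle, so no existing $\chi = \omega$ result applies.

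Establishing that almost every $\bar{C_\ell}$-free graph satisfies $\chi = \omega$ for general $\ell$ is the main obstacle, and I expect it to be essentially the entire difficulty of the conjecture. This is a question about the typical chromatic and clique structure of graphs forbidding a complement-of-a-cycle as an induced subgraph, a regime not reached by the Reed--Yuditsky methods behind Theorem~\ref{Thm: R&Y}; indeed even $\ell = 6$, whose complement is tied to the excluded case $C_6$, appears delicate. Proving it would likely require extending the container and structural machinery underlying Theorem~\ref{Thm: R&Y} to these forbidden subgraphs, after which the reduction above converts the result immediately into the conjecture. I would therefore regard the Cops and Robbers content as essentially complete via Theorem~\ref{Thm: Clique Cover = Cop Structure}, with all remaining work lying in extremal and probabilistic graph theory.
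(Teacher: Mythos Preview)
This statement is a \emph{conjecture} in the paper's Future Work section; the paper offers no proof and presents it explicitly as an open problem. So there is no paper proof to compare your proposal against.

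Your proposal is not a proof either, and to your credit you say so: you reduce the conjecture to the assertion that almost every $\bar{C_\ell}$-free graph has $\chi=\omega$, observe that this is available unconditionally only for $\ell=5$ via Theorem~\ref{Thm: R&Y}, is the conditional hypothesis of Theorem~\ref{Coro: typical C_l-free Graphs} for $\ell\in\{3,4\}$, and is genuinely open for $\ell\geq 6$. That reduction is exactly the mechanism the paper uses for $\ell\in\{3,4\}$, so as a \emph{strategy} it is aligned with the paper. But the core step you isolate is not established, so what you have written is a plausible plan of attack, not a proof. One additional loose end: even in the $\ell=5$ case, your ``$\alpha\to\infty$ for almost every $C_\ell$-free graph'' claim is asserted rather than proved; you would still need to argue that $C_5$-free graphs with bounded independence number are a vanishing fraction, which is believable but not quite as immediate as Lemma~\ref{Lemma: alpha>2 Tri-free}.
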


The final problem we propose is to determine, as a function of $k$, the order of a smallest graph $G$ with $c(G) = \theta(G) = k$.
Finding an exact value when $k$ is large seems impossible, so we propose the following easier problem.

\begin{problem}
    Let $f(k)$ be the least integer such that there exists a graph $G$ with order $f(k)$ and $c(G) = \theta(G) = k$.
    What is the least constant $c$ such that $f(k) = O(k^{c})$?
\end{problem}

\section*{Acknowledgement}
The authors would like to thank Bruce Reed and Yelena Yuditsky for their helpful feedback concerning 
if almost every $F$-free graph $G$, where $F$ is a forest, satisfies $\chi(G) = \omega(G)$.
Clow is supported by the Natural Sciences and Engineering Research Council of Canada (NSERC) through PGS D-601066-2025
while
Zaguia is supported by the Royal Military College of Canada and NSERC Canada DDG-2023-00038.

\bibliographystyle{abbrv}
\bibliography{bib}

\end{document}